\providecommand{\inj}{\mathop{inj}}
\providecommand{\st}{\,\big|\,}
\providecommand{\real}{\mathbb{R}}
\providecommand{\nat}{\mathbb{N}}
\providecommand{\tor}{\mathbb{T}}
\providecommand{\vol}{\mathop{vol}}
\providecommand{\trace}{\mathop{trace}}
\newtheorem{theorem}{Theorem}[section]
\newtheorem{lemma}[theorem]{Lemma}
\title[Totally geodesic maps into no focal points]{Totally geodesic maps into manifolds with no focal points}
\author{James Dibble}
\begin{document}
\maketitle

\begin{abstract}
The set of totally geodesic representatives of a homotopy class of maps from a compact Riemannian manifold $M$ with nonnegative Ricci curvature into a complete Riemannian manifold $N$ with no focal points is path-connected and, when nonempty, equal to the set of energy-minimizing maps in that class. When $N$ is compact, each map from a product $W \times M$ into $N$ is homotopic to a map that's totally geodesic on each $M$-fiber. These results may be used to extend to the case of no focal points a number of splitting theorems of Cao--Cheeger--Rong about manifolds with nonpositive sectional curvature and, in turn, to generalize a non-collapsing theorem of Heintze--Margulis. In contrast with previous approaches, they are proved using neither a geometric flow nor the Bochner identity for harmonic maps.
\end{abstract}

\section{Introduction}

A map $u : M \rightarrow N$ between Riemannian manifolds is \textbf{totally geodesic} if, whenever $\gamma : (a,b) \to M$ is a geodesic, the composition $u \circ \gamma$ is a geodesic. A celebrated theorem of Eells--Sampson \cite{EellsSampson1964} states that, when $M$ is compact with nonnegative Ricci curvature and $N$ is compact with nonpositive sectional curvature, each map from $M$ to $N$ is homotopic to a totally geodesic map. They proved this by showing that unique solutions to their heat equation \eqref{heat equation} exist for all time and uniformly subconverge to harmonic maps, which, by the Bochner identity for harmonic maps \eqref{bochner identity}, must be totally geodesic. Hartman \cite{Hartman1967} further showed that, when two maps are initially close, the distance between the corresponding solutions to the heat equation is nonincreasing. It follows that the set of totally geodesic maps in each homotopy class is path-connected, that all harmonic maps are energy-minimizing and totally geodesic, and, as noted by Cao--Cheeger--Rong \cite{CaoCheegerRong2004}, that every map from a product $W \times M$ into $N$ is homotopic to a map that's totally geodesic on each $M$-fiber.

The purpose of this paper is to generalize those results to codomains with no focal points, a synthetic condition more general than having nonpositive curvature.

\begin{theorem}\label{main theorem for nnrc}
    Let $M$ be a compact Riemannian manifold with nonnegative Ricci curvature, $N$ a complete Riemannian manifold with no focal points, and $[F]$ a homotopy class of maps from $M$ to $N$. Then the following hold:\\
    \textbf{(a)} The set of totally geodesic maps in $[F]$ is path-connected;\\
    \textbf{(b)} If $[F]$ contains a totally geodesic map, then a map in $[F]$ is energy-minimizing if and only if it is totally geodesic;\\
    \textbf{(c)} If $N$ is compact, then $[F]$ contains a totally geodesic map.
\end{theorem}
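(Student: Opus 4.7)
Part (c) asserts existence of a totally geodesic map in $[F]$ when $N$ is compact. My plan is to obtain such a map as an energy-minimizer and then verify it is totally geodesic by a direct geometric argument in the universal cover, in keeping with the paper's stated avoidance of both heat flow and the Bochner identity.

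The first step is variational: since $N$ is compact, $[F]$ contains maps of finite energy, so $E_* := \inf_{[F]} E$ is finite. I would take a minimizing sequence $(u_n) \subset [F]$ and extract a weak $W^{1,2}$-limit $u_*$ using that the target is compact and the energies are uniformly bounded; lower semicontinuity gives $E(u_*) = E_*$. Because $N$ has no focal points — hence no conjugate points — exponential maps on $\tilde{N}$ are diffeomorphisms, and Schoen--Uhlenbeck-type small-energy regularity together with a smooth approximation argument should give that $u_*$ is smooth and lies in $[F]$.

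The second step, showing $u_*$ is totally geodesic, is the main obstacle. Passing to the equivariant lift $\tilde{u}_* : \tilde{M} \to \tilde{N}$, I would argue one geodesic at a time. For a geodesic $\alpha$ in $\tilde{M}$ projecting to a closed geodesic in $M$, the peakless-distance property of no focal points ensures that two geodesics in $\tilde{N}$ staying a bounded distance apart are asymptotically parallel and span a totally geodesic flat strip (Eschenburg). If $\tilde{u}_* \circ \alpha$ fails to be a geodesic, straightening it to the unique geodesic through its corresponding periodic points strictly decreases length over each period; the hope is that, using $\mathrm{Ric} \geq 0$ on $M$ together with a cutoff or averaging argument, such a local straightening extends to a global equivariant perturbation of $u_*$ that strictly lowers the energy, contradicting minimality. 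Density of periodic geodesic directions in the unit tangent bundle of $M$ would then upgrade the conclusion from a dense set of directions to all geodesics, giving total geodesicity.

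The heart of the difficulty is this globalization: no focal points delivers only peakless distance, a weaker convexity than in the nonpositively curved setting, so the replacement construction must preserve both smoothness and homotopy class while strictly decreasing energy. If this direct route proves too fragile, an alternative is to leverage parts (a) and (b), already proved, by producing any single totally geodesic ``seed'' in $[F]$ via an equivariant one-shot geometric projection in $\tilde{N}$ — for instance through an equivariant center-of-mass or Busemann-function construction adapted to no focal points — and then invoking (b) to identify it with the energy-minimizer obtained in step one.
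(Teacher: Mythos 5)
Your Step 1 (existence of an energy minimizer in $[F]$ by the direct method) is plausible and is in fact known -- the paper cites Xin's result that, for targets with no focal points, every homotopy class contains an energy-minimizing representative -- though you should be aware that keeping the minimizing sequence's limit in the prescribed homotopy class is itself a nontrivial point. The genuine gap is in Step 2, and you have correctly located it yourself: there is no mechanism by which straightening the image of a single geodesic of $M$ produces a global perturbation of $u_*$ that strictly decreases the \emph{energy}. Energy is the integral over $M$ of $\|du\|^2$, summed over all directions at once; modifying $u_*$ in a neighborhood of one geodesic so as to shorten its image will in general increase the differential in transverse directions, and ``$\mathrm{Ric}\geq 0$ plus a cutoff or averaging argument'' is not a construction. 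This is precisely the step that the classical proof handles with the Bochner identity, which is unavailable here, and no substitute is supplied. The auxiliary claim that periodic geodesic directions are dense in the unit tangent bundle of a compact manifold with $\mathrm{Ric}\geq 0$ is also unjustified and false in general. Your fallback (``produce a single totally geodesic seed by an equivariant projection, then invoke (b)'') is the right instinct but is empty as stated: producing the seed \emph{is} the content of part (c).

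For contrast, the paper never takes an energy minimizer to prove (c) and argues in an entirely different order. The structure of the \emph{domain} does the work: by the Cheeger--Gromoll splitting theorem, $M$ is finitely covered by $M_1\times\tor^k$ with $M_1$ compact and simply connected (up to finite $\pi_1$, which must map trivially into the torsion-free $\pi_1(N)$), so the problem reduces to maps from tori. O'Sullivan's flat torus theorem then \emph{constructs} totally geodesic flat tori in $N$ realizing the relevant Abelian subgroups of $\pi_1(N)$; the explicit loop maps $\Upsilon_{v_1\otimes\cdots\otimes v_k}$ and the connectedness of the set $C_\Sigma$ of closed-geodesic tensors give the homotopy from the original map to a totally geodesic one on each torus fiber; and the Riemannian center of mass in $\hat{N}$ (where $\rho=\infty$) averages over the deck group of the finite cover to descend to $M$. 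Part (b) is then quoted from the integral techniques of an earlier paper, not derived from a minimizing sequence. If you want to salvage your outline, the missing ideas you need are the Cheeger--Gromoll reduction of the domain and the flat torus theorem in the target; the variational Step 1 can be discarded entirely.
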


\begin{theorem}\label{product version for nnrc}
    Let $M$ be a compact Riemannian manifold with nonnegative Ricci curvature, $N$ a compact Riemannian manifold with no focal points, and $W$ a manifold. Then each map from $W \times M$ into $N$ is homotopic to a map that's totally geodesic on each $M$-fiber.
\end{theorem}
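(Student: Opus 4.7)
The plan is to build the required homotopy by a cell-by-cell induction over a CW decomposition of $W$, with Theorem~\ref{main theorem for nnrc} providing the fiberwise totally geodesic data and the homotopy extension property propagating local modifications globally.

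Fix a CW structure on $W$. I would construct inductively, for $k = 0, 1, 2, \ldots$, a map $\tilde F_k : W \times M \to N$, homotopic to $F$, such that $\tilde F_k|_{\{w\}\times M}$ is totally geodesic for every $w$ in the $k$-skeleton $W^{(k)}$. The base case uses Theorem~\ref{main theorem for nnrc}(c) vertex by vertex: for each $w \in W^{(0)}$, pick a totally geodesic representative $G_w$ of $[F(w,\cdot)]$, then use a homotopy from $F(w,\cdot)$ to $G_w$ together with the cofibration $W^{(0)} \times M \hookrightarrow W \times M$ to arrange $F(w,\cdot) = G_w$ at each vertex.

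For the inductive step at a $k$-cell $e$, the induction hypothesis supplies a continuous map $\partial e \to \mathcal{T}_{[F|_{\bar e \times M}]}$, where $\mathcal{T}_{[\cdot]}$ denotes the space of totally geodesic representatives in a given homotopy class (well defined because $\bar e$ is contractible). One must extend this map continuously across $e$ while remaining inside $\mathcal{T}_{[\cdot]}$. For $k = 1$, Theorem~\ref{main theorem for nnrc}(a) does exactly this: the two totally geodesic endpoint maps are joined by a path inside $\mathcal{T}_{[\cdot]}$. A further application of the homotopy extension property then upgrades the extension on the skeleton to an actual homotopy of $\tilde F_{k-1}$ defined on all of $W \times M$.

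The main obstacle is the inductive step for $k \geq 2$, which calls for $\pi_{k-1}(\mathcal{T}_{[\cdot]}) = 0$, whereas Theorem~\ref{main theorem for nnrc}(a) only yields $\pi_0 = 0$. To bridge this, I would argue that any two totally geodesic maps $f_0, f_1$ in a common class are joined by a canonical \emph{parallel} family $f_t(x) = \gamma_x(t)$, with $\gamma_x$ the unique geodesic in the universal cover $\tilde N$ from the appropriate lift of $f_0(x)$ to that of $f_1(x)$, uniqueness coming from the fact that a simply connected manifold with no focal points has unique geodesics between any two points. Verifying that each $f_t$ is itself totally geodesic and that this join operation is continuous and associative would endow $\mathcal{T}_{[\cdot]}$ with a geodesically convex structure in the compact-open topology, forcing the higher homotopy obstructions to vanish and letting the induction close. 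This parallel-family analysis, which generalizes Hartman's rigidity from the nonpositive curvature case, is where the real substance of the proof lies.
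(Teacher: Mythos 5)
The overall strategy---viewing $F$ as a map $W \to \mathrm{Map}(M,N)$ and compressing it skeleton by skeleton into the subspace $\mathcal{T}$ of totally geodesic representatives---is genuinely different from the paper's argument, which works with local lifts over contractible neighborhoods of $W$ and glues the resulting local homotopies with a partition of unity and the Riemannian center of mass in $\hat N$. But your inductive step rests on a false claim: the space $\mathcal{T}$ of totally geodesic maps in a class is not contractible, and the proposed geodesic join does not make it so. By the proof of Theorem \ref{main theorem}(a) together with Lemma \ref{splitting of spaces of closed geodesics}, the totally geodesic maps in a class are parameterized (after passing to the cover $M_1 \times \tor^k$) by $C_\Sigma \cong C \times \tor^m$ with $C$ convex; the $\tor^m$-factor survives, so $\pi_1(\mathcal{T})$ is typically $\mathbb{Z}^m \neq 0$. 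Concretely, for $M = N = S^1$ flat and the identity class, $\mathcal{T}$ is the circle of rotations. Your join $f_t(x)=\gamma_x(t)$ is indeed a path of totally geodesic maps (it is Lemma \ref{gluing technique for toruses} with two points and weights $(1-t,t)$), but it depends on a choice of compatible lifts of $f_0$ and $f_1$ to $\hat N$, equivalently on a homotopy class of path from $f_0$ to $f_1$; there is no continuous canonical choice as $f_1$ ranges over $\mathcal{T}$ (already impossible on the circle of rotations), so no convex structure and no vanishing of higher homotopy groups.

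A second, related gap appears already at $k=1$: replacing $\tilde F_0$ on a $1$-cell by an arbitrary path in $\mathcal{T}$ between the two endpoint maps can change the homotopy class of the global map. For $W=M=N=S^1$ and $f(w,x)=w+x$, the restriction to the single vertex is the identity, and replacing the loop $w \mapsto f(w,\cdot)$ by the constant path at the identity---a legitimate path in $\mathcal{T}$---produces $(w,x)\mapsto x$, which is not homotopic to $f$. What your induction actually needs is that the inclusion $\mathcal{T} \hookrightarrow \mathrm{Map}_{[F(w,\cdot)]}(M,N)$ be a weak homotopy equivalence, so that each compression can be performed rel the lower skeleton in the correct relative homotopy class; path-connectedness of $\mathcal{T}$ from Theorem \ref{main theorem for nnrc}(a) is not enough, and the weak-equivalence statement would require an argument of roughly the same depth as the paper's. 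The paper sidesteps all of this by never leaving the universal cover: on overlaps of the contractible neighborhoods $V_w \subseteq W$ the competing local homotopies are averaged simultaneously by the center of mass, which is canonical and equivariant, so no obstruction theory is needed.
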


\noindent In fact, somewhat more general versions of the above will be proved. These hold for domains that are finitely covered by a diffeomorphic product in a commutative diagram inspired by the Cheeger--Gromoll splitting theorem.

The methods of this paper are novel in that they use neither a geometric flow nor the Bochner identity for harmonic maps \eqref{bochner identity}. Instead, the key tools are the Cheeger--Gromoll splitting theorem, the Riemannian center of mass, the flat torus theorem, and the integral techniques in \cite{Dibble2018a}. Thus Theorem \ref{main theorem for nnrc} may be considered a spiritual converse to a theorem of Jost \cite{Jost1991}, who used the results of Eells--Sampson to prove the flat torus theorem for manifolds with nonpositive curvature. Moreover, as Eschenburg--Heintze \cite{EschenburgHeintze1984} gave a proof of the Cheeger--Gromoll splitting theorem that uses only the maximum principle and a Bochner--Lichnerowicz formula, those are the only PDE results required.

Theorems \ref{main theorem for nnrc} and \ref{product version for nnrc} allow one to generalize to manifolds with no focal points results for nonpositively curved manifolds that depend only on energy-minimizing maps being totally geodesic. For example, Theorem \ref{product version for nnrc} may be used to extend to the case of no focal points a number of splitting theorems of Cao--Cheeger--Rong \cite{CaoCheegerRong2001,CaoCheegerRong2004}, including the following.

\begin{theorem}\label{cao--cheeger--rong}
    Let $M$ and $N$ be compact manifolds of the same dimension. Suppose $M$ admits an $F$-structure. If there exists a continuous function $f : M \to N$ with nonzero degree, then every metric on $N$ with no focal points admits a local splitting structure for which there is a consistency map homotopic to $f$.
\end{theorem}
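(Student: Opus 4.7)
The plan is to replicate the Cao--Cheeger--Rong strategy for nonpositively curved codomains \cite{CaoCheegerRong2004}, substituting Theorems \ref{main theorem for nnrc} and \ref{product version for nnrc} for the Eells--Sampson--Hartman heat-flow machinery used there. Recall that an $F$-structure presents $M$ as glued from open sets $U_\alpha$, each finitely covered by a smooth product $W_\alpha \times T^{k_\alpha}$ on which a torus acts along the second factor, with overlaps equivariant with respect to the structural actions. Since the statement concerns \emph{every} metric on $N$ with no focal points, one is free to equip $M$ with a nonnegative Ricci metric invariant under the $F$-structure; such metrics are standard in this setting.

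First I would, for each chart $\alpha$, pull the given $f$ back to the finite cover $W_\alpha \times T^{k_\alpha}$ and apply Theorem \ref{product version for nnrc} to obtain a homotopic map $f_\alpha$ that is totally geodesic on every torus fiber. Because $N$ has no focal points, each such fiber map is a totally geodesic isometric immersion of a flat torus, and the flat torus theorem in the no-focal-points setting produces a local metric splitting of a tubular neighborhood of its image in $N$. This gives the candidate $\alpha$-local splitting structure on $N$, together with a local consistency map $f_\alpha$ homotopic to $f|_{U_\alpha}$.

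Next I would address overlaps: on $U_\alpha \cap U_\beta$ one obtains two fiberwise-totally-geodesic representatives of the same homotopy class, differing by the overlap map of the $F$-structure. Theorem \ref{main theorem for nnrc}(a) connects them by a path of totally geodesic maps, and Theorem \ref{main theorem for nnrc}(b) identifies them as energy-minimizers with a common image up to the ambient local splitting. Running this fiberwise, together with the equivariance of the splittings produced in Theorem \ref{product version for nnrc}, shows that the $\alpha$- and $\beta$-local splittings agree on overlaps up to the prescribed $F$-structure action, so they assemble into a global local splitting structure on $N$ with a single consistency map homotopic to $f$.

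The main obstacle I anticipate is exactly this patching step. Theorems \ref{main theorem for nnrc} and \ref{product version for nnrc} produce totally geodesic fiber maps only up to homotopy, but a local splitting structure demands compatibility up to the structural torus action, not merely up to homotopy. Overcoming this requires a canonical choice within each homotopy class; I expect this to follow from combining the path-connectedness of totally geodesic representatives (Theorem \ref{main theorem for nnrc}(a)) with the rigidity of totally geodesic maps of flat tori into manifolds with no focal points, using the Riemannian-center-of-mass and flat-torus-theorem techniques that the introduction cites as the backbone of this paper's proofs.
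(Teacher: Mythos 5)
The paper gives no argument for this theorem beyond the remark that it follows \cite{CaoCheegerRong2004} almost verbatim once Theorem \ref{product version for nnrc} replaces the parameterized heat flow, so your overall strategy is the intended one. However, your opening move contains a genuine error. You propose to equip $M$ with an invariant metric of nonnegative Ricci curvature so that Theorem \ref{product version for nnrc} applies with $M$ (or a chart of it) as the domain. A manifold admitting an $F$-structure need not admit any metric of nonnegative Ricci curvature --- $\Sigma_g \times S^1$ with $g \geq 2$ carries an obvious $F$-structure --- so this step fails, and no such metric is "standard in this setting." It is also unnecessary: in Theorem \ref{product version for nnrc} the curvature hypothesis is imposed on the \emph{fiber} factor of $W \times M$, and the correct application takes that fiber to be the flat torus $\tor^{k_\alpha}$ of the structural action (which has zero, hence nonnegative, Ricci curvature) and takes $W$ to be the chart, i.e., one deforms the map $(w,\vartheta) \mapsto f(\vartheta \cdot w)$ defined on $W_\alpha \times \tor^{k_\alpha}$. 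No auxiliary metric on $M$ enters the argument at all; only the flat metric on the structural tori and the given metric on $N$ do.

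Two further points. First, you never invoke the hypothesis that $\deg f \neq 0$. Some use of it is unavoidable: for a constant $f$ the conclusion is generally false, since a local splitting structure requires nontrivial Euclidean factors. In \cite{CaoCheegerRong2004} the nonzero degree (together with $\dim M = \dim N$) is what forces the images under $f_*$ of the orbit fundamental groups to be infinite abelian subgroups $G_\alpha \subseteq \pi_1(N)$ of positive rank, so that Theorem \ref{flat torus theorem} produces genuine factors $\min(G_\alpha) \cong D_\alpha \times \real^{m_\alpha}$ with $m_\alpha \geq 1$. Second, your patching paragraph is a statement of intent rather than an argument; the mechanism that makes it work is that compatibility of the torus actions on overlaps forces the subgroups $G_\alpha$ to be commensurably nested, so the flat-torus-theorem splittings are compatible by construction, while the global continuity of the consistency map in the $W$-direction is exactly what Theorem \ref{product version for nnrc} (as opposed to a fiber-by-fiber application of Theorem \ref{main theorem for nnrc}) supplies. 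As written, your proposal would need the first error repaired and the degree hypothesis incorporated before it could be called a proof.
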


\noindent Loosely speaking, a manifold admits an $F$-structure if it can be cut into pieces that, up to finite covers, admit effective torus actions that are compatible on overlaps; precise definitions may be found in \cite{CaoCheegerRong2004}. It follows that the universal cover of any compact manifold with no focal points that admits an $F$-structure is the union of isometric products $D_i \times \real^k$, each of which is convex, whose Euclidean factors project to immersed submanifolds that, up to homotopy, contain the orbits of the torus actions. With this, one may extend a non-collapsing theorem of Heintze--Margulis.

\begin{theorem}\label{heintze--margulis extension}
    For each $n \in \nat$, there exists $\varepsilon = \varepsilon(n) > 0$ such that, if $M$ is a compact $n$-dimensional manifold that admits a Riemannian metric with no focal points and negative Ricci curvature at a point, then, for every metric on $M$ with $|\mathop{sec}_M| \leq 1$, there is a point at which the injectivity radius is at least $\varepsilon$.
\end{theorem}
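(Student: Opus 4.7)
The plan is to argue by contradiction, combining Theorem \ref{cao--cheeger--rong} with the Cheeger--Fukaya--Gromov collapsing theory. Suppose for contradiction that no such $\varepsilon(n)$ works; then for each $k \in \nat$ one has a metric $g_k$ on $M$ with $|\mathop{sec}_{g_k}| \leq 1$ and $\inj_{g_k}(p) < 1/k$ at every $p \in M$. For $k$ sufficiently large, such uniform collapse with bounded curvature forces $M$ to admit an F-structure of positive rank, after passing to the orientation double cover if necessary so that degrees of self-maps are defined in the usual sense.

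With the F-structure in hand, I would apply Theorem \ref{cao--cheeger--rong} with $N := M$ and $f := \mathop{id}_M$, which has nonzero degree. Via the description of the universal cover stated just after that theorem, the conclusion is that, for the given no-focal-points metric on $M$, the universal cover $\tilde M$ is the union of convex isometric products $D_i \times \real^k$ with $k \geq 1$, whose Euclidean factors cover all of $\tilde M$.

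To derive the contradiction, fix a point $p \in M$ at which the Ricci curvature is negative, lift to $\tilde p \in \tilde M$, and choose $i$ with $\tilde p \in D_i \times \real^k$. Any nonzero vector $v \in T_{\tilde p} \tilde M$ tangent to the Euclidean factor satisfies $\mathop{sec}(v,w) = 0$ for every $w \in T_{\tilde p} \tilde M$ by the product structure, hence $\mathop{Ric}(v,v) = 0$. Pushing down to $p$ via the covering projection contradicts the assumption that the Ricci tensor is negative definite at $p$.

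The step I expect to be the main obstacle is the initial collapse-to-F-structure reduction. One must extract an honest F-structure of positive rank, not merely an N-structure, from pointwise collapse of the injectivity radius under bounded sectional curvature; it is precisely this input that forces $\varepsilon(n)$ to depend only on dimension. If only an N-structure is directly available from the collapsing theory, some additional work (for instance, passing to a finite cover on which the nilpotent Killing structure polarizes to a torus structure) would be required so that Theorem \ref{cao--cheeger--rong} is applicable. Everything downstream is then routine, reducing to the vanishing of sectional (and hence Ricci) curvature along the Euclidean factor of a Riemannian product.
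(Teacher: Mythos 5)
Your proposal is correct and is essentially the argument the paper intends: the proof of Theorem \ref{heintze--margulis extension} is explicitly omitted and deferred to Cao--Cheeger--Rong, whose argument is exactly your contradiction scheme --- pointwise collapse with $|\mathop{sec}| \leq 1$ yields, by the Cheeger--Gromov collapsing theorem, an $F$-structure of positive rank directly (so the $N$-structure worry you raise does not arise), after which Theorem \ref{cao--cheeger--rong} applied to $f = \mathop{id}$ produces the local splitting, and the flat Euclidean factor through the lift of $p$ forces a null direction of the Ricci tensor there. The only point worth making explicit is that the convex pieces $D_i \times \real^k$ covering the universal cover are full-dimensional near the relevant point, so that the product structure actually determines the curvature at the lift of $p$; this is part of the definition of a local splitting structure in Cao--Cheeger--Rong.
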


\noindent The proofs of Theorems \ref{cao--cheeger--rong} and \ref{heintze--margulis extension} will be omitted, as they follow the arguments in \cite{CaoCheegerRong2004} almost verbatim, once Theorem \ref{product version for nnrc} is used in place of a parameterized heat flow and the results of Eells--Sampson and Hartman.

\subsection*{Organization of the paper}

Section 2 discusses the heat flow and other previously known methods for proving the existence of energy-minimizing or totally geodesic maps into manifolds with nonpositive curvature. Those results are included only to place the rest of the paper into context; they are not needed to prove the main theorems. The reader unfamiliar with manifolds with no focal points is encouraged to read subsection 3.2 first.

Section 3 provides essential background information. Subsection 3.1 defines the notions of convexity used throughout the paper. Subsection 3.2 discusses basic properties of manifolds with no conjugate points and no focal points. Subsection 3.3 describes a method for constructing a homotopy between two maps from a torus into a manifold with no conjugate points that are compatible at the level of fundamental group. Subsection 3.4 states basic properties of the Riemannian center of mass. Subsection 3.5 details the class of domains to which the main theorems apply.

Section 4 contains the statement and proof of the main theorems. Theorems \ref{main theorem for nnrc} and \ref{product version for nnrc} are obtained as immediate consequences.

\section{Heat flow methods}

This section discusses previous approaches to the problem of finding energy-minimizing or totally geodesic representatives of homotopy classes of maps into manifolds with nonpositive curvature. Most of this is well known and may be found in a variety of sources, including \cite{Jost2011}, \cite{Xin1996}, and \cite{EellsSampson1964}.

Let $u : M \to N$ be a $C^1$ map between Riemannian manifolds. Denote by $u^{-1}(TN) \to M$ the vector bundle that attaches to each $x \in M$ the tangent space $T_{u(x)} N$ and by $\mathscr{L} \big( TM, u^{-1}(TN) \big) \to M$ the vector bundle that attaches to each $x \in M$ the space of linear maps $T_x M \to T_{u(x)} N$. The \textbf{differential} $du$ is the section of $\mathscr{L} \big( TM, u^{-1}(TN) \big)$ satisfying $du(v) = u_*(v)$ for each $v \in TM$, where $u_* : TM \to TN$ is the push-forward map. Denote by $\mathscr{L} \big( TM \odot TM, u^{-1}(TN) \big) \to M$ the vector bundle that attaches to each $x \in M$ the space of linear maps $T_x M \odot T_x M \to T_{u(x)} N$, where $\odot$ is the symmetric product. With respect to the natural bundle metrics $g^{-1}$ on $T^* M$ and $h$ on $u^{-1}(TN)$, the bundle $\mathscr{L} \big( TM, u^{-1}(TN) \big) \cong T^* M \otimes u^{-1}(TN)$ is endowed with a natural connection. When $u$ is $C^2$, its \textbf{second fundamental form} is the section of $\mathscr{L} \big( TM \odot TM, u^{-1}(TN) \big)$ defined by $B_u = \nabla du$. If $\gamma : (a,b) \rightarrow M$ is a geodesic, then $B_u(\gamma',\gamma') = \nabla_{(u \circ \gamma)'} (u \circ \gamma)'$. Therefore, a $C^2$ map is totally geodesic if and only if its second fundamental form vanishes.

The \textbf{energy density} of a $C^1$ map $u : M \rightarrow N$ is the function $e_u : M \rightarrow [0,\infty)$ defined by $e_u(x) = \frac{1}{2}\| d_x u \|^2$, where $\| \cdot \|$ is the norm induced by the metric on $\mathscr{L} \big( TM, u^{-1}(TN) \big)$. The \textbf{energy} of $u$ is $E(u) = \int_M e_u d{\vol}_M$. When $u$ is $C^2$, its \textbf{tension field} is the section of $u^{-1}(TN)$ defined by $\tau_u = \trace(B_u)$. A $C^2$ map is \textbf{harmonic} if its tension field vanishes. Totally geodesic maps are harmonic, but the converse is false. For any $C^1$ variation $F : [0,\varepsilon) \times M \rightarrow N$ of $u$, the first variation of energy is
\[
    \frac{d}{dt}\Big|_{t=0} E\big( F(t,\cdot) \big) = -\int_M h \big( \tau_u,V \big) d{\vol}_M\textrm{,}
\]
where $V$ is the variation field of $F$ at time $t = 0$. In other words, $\tau_u$ is the negative gradient field of the energy functional, and harmonic maps are critical points of it.

For compact $M$, unique short-term solutions to the Eells--Sampson \cite{EellsSampson1964} heat equation
\begin{equation}\label{heat equation}
    \begin{aligned}
        &\frac{\partial u}{\partial t} = \tau_u & \textrm{ on } & \phantom{aa} M \times [0,\varepsilon)\\
        &u = u_0 & \textrm{ on } & \phantom{aa} M \times \{ 0 \}
    \end{aligned}
\end{equation}
exist for any $C^1$ initial data $u_0 : M \rightarrow N$. When $N$ is compact with nonpositive sectional curvature, solutions exist for all time and uniformly subconverge to harmonic maps. It follows that each homotopy class of maps contains an energy-minimizing harmonic representative.

When $M$ has nonnegative Ricci curvature and $N$ nonpositive sectional curvature, integrating both sides of the following identity, which dates to the work of Bochner \cite{Bochner1940}, implies that harmonic maps are totally geodesic:
\begin{equation}\label{bochner identity}
    \Delta e_u = \| B_u \|^2 + h \big( df \big( \mathop{Ric}\phantom{}^M\!(e_i) \big), df \big( e_i \big) \big) - \mathop{Rm}\!\phantom{}^N\!\big( df(e_i), df(e_j), df(e_j), df(e_i) \big)\textrm{.}
\end{equation}
Here, $u$ is assumed to be harmonic, and $\Delta$ is the Laplace--Beltrami operator; that is, $\Delta e_u = \mathop{div}(\nabla e_u)$.

Hartman \cite{Hartman1967} showed that, when a homotopy class $[F]$ of maps into $N$ contains a harmonic representative, the heat flow within $[F]$ uniformly converges to a harmonic map, the space of harmonic maps in $[F]$ is path-connected, and a map in $[F]$ is harmonic if and only if it is energy-minimizing. He derived these results from the following monotonicity: If $u,v : M \times [0,\infty) \to N$ are solutions to \eqref{heat equation} whose respective initial data $u_0,v_0 : M \to N$ are sufficiently close, then the function $t \mapsto \max_{x \in M} d_N \big( u(x,t), v(x,t) \big)$ is nonincreasing. Cao--Cheeger--Rong \cite{CaoCheegerRong2004} used this monotonicity to show that the parameterized heat flow obtained by simultaneously flowing the restriction of any map $f : W \times M \to N$ to the various $M$-fibers uniformly converges to a map that's totally geodesic on each $M$-fiber.

A theorem of Li--Zhu \cite{LiZhu2010} implies that, when $N$ has no focal points, solutions to the heat equation exist for all time and subconverge to harmonic maps. It follows that each homotopy class of maps from $M$ to $N$ contains an energy-minimizing representative, a result which was also proved earlier by Xin \cite{Xin1984} using the direct method and the maximum principle. However, without curvature assumptions, the Bochner identity \eqref{bochner identity} cannot be gainfully employed. Thus it's not clear that, for domains $M$ with nonnegative Ricci curvature, energy-minimizing maps into $N$ are totally geodesic. Moreover, without Hartman's monotonicity, it's not clear that the long-term existence and uniform subconvergence on each $M$-fiber of a solution to the parameterized heat flow implies that the flow converges to a continuous map on $W \times M$. Those are the main technical points bypassed in this paper.

\section{Preliminaries}

This section contains background information on the tools used in the proof of the main theorems. Most of these results appear elsewhere in the literature; notable exceptions are Theorem \ref{structure theorem for geodesic loops} and Lemma \ref{normal covering}. Throughout the rest of the paper, $(M,g)$ and $(N,h)$ will be complete Riemannian manifolds. For the sake of simplicity, they will always assumed to be $C^2$, although many of the results hold when $M$ is only $C^1$. Where appropriate, $\iota$ denotes inclusion. That is, for a map $f$ defined on $A \times B$ and $(p,z) \in A \times B$, $f \circ \iota_z(p) = f(p,z) = f \circ \iota_p(z)$. Certain projections are denoted by $\rho_i$ and $\pi_i$. The symbol $\rho$ without a subscript denotes the strong convexity radius, and $\pi : \hat{N} \to N$ denotes the universal covering map. Note that the symbol $\pi_1$ is overloaded, as it can refer to both a projection and the fundamental group of a manifold.

\subsection{Convexity}

A subset $X$ of $N$ is \textbf{convex} if any two points in $X$ are joined by a minimal geodesic that lies inside $X$ and \textbf{strongly convex} if any two points in $X$ are joined by a unique minimal geodesic in $N$ and all such geodesics lie inside $X$. The \textbf{convex hull} of a set $Y$, denoted $\mathop{conv}(Y)$, is the smallest set that contains $Y$ and has the following property: If $x,y \in \mathop{conv}(Y)$, then $\mathop{conv}(Y)$ contains every minimal geodesic from $x$ to $y$. That is, $\mathop{conv}(Y)$ is the intersection of all sets that contain $Y$ and have that property. If $Y$ is contained in a strongly convex set, then $\mathop{conv}(Y)$ is the intersection of all strongly convex sets containing $Y$. The \textbf{convex closure} of $Y$ is the convex hull of the closure of $Y$.

A function $f : I \to [-\infty,\infty]$, where $I \subseteq \real$ is an interval, is \textbf{strictly convex} if $f(st_1 + (1-s)t_2) < sf(t_1) + (1-s)f(t_2)$ for all $s \in (0,1)$ and all distinct $t_1,t_2 \in I$. A function $f : N \to [-\infty,\infty]$ is \textbf{strictly convex} if its restriction to any nonconstant geodesic is strictly convex. A $C^2$ function $f$ is strictly convex if and only if its Hessian $\nabla^2 f$ is positive definite or, equivalently, $(f \circ \gamma)'' > 0$ for all nonconstant geodesics $\gamma$ along which $f$ is defined. If $f$ is a strictly convex function and $Y$ is a convex subset of the domain of $f$, then $f$ has at most one local minimum in $Y$, which, if there is one, must be a global minimum. If $(\Lambda,\mu)$ is a measure space and $f_\lambda$ is family of strictly convex functions indexed over $\Lambda$ and with a common domain, then $x \mapsto \int_\Lambda f_\lambda(x) \,d\mu$ is strictly convex.

For each $p \in N$, the \textbf{injectivity radius at $p$} is
\[
    \inj(p) = \max \big\{ R > 0 \st \exp_p|_{B(0,s)} \textrm{ is injective for all } 0 < s < R \big\}\textrm{,}
\]
the \textbf{convexity radius at $p$} is
\[
    r(p) = \textrm{max} \{ R > 0 \st B(p,s) \textrm{ is strongly convex for all } 0 < s < R \}\textrm{,}
\]
and the \textbf{strong convexity radius at $p$} is
\[
    \rho(p) = \textrm{max} \{ R > 0 \st r(x) \leq R \textrm{ for all } x \in B(p,R) \}\textrm{.}
\]
Each of these may be infinite. For a subset $X$ of $N$, one writes $\inj(X) = \inf_{x \in X} \inj(x)$, $r(X) = \inf_{x \in X} r(x)$, and $\rho(X) = \inf_{x \in X} \rho(x)$.

It is well known that $\inj$, $r$, and $\rho$ satisfy $0 < \rho \leq r \leq \inj$ \cite{Klingenberg1995}. By taking the second variation of arclength, one may show that, for all $p \in N$, $d(\cdot,p)$ is strictly convex on $B(p,r(p)) \setminus \{ p \}$. It follows that $d^2(\cdot,p)$ is strictly convex on $B(p,r(p))$.

\subsection{Manifolds with no conjugate points or no focal points}

If $\gamma : [a,b] \to N$ is a geodesic, then $\gamma(a)$ and $\gamma(b)$ are \textbf{conjugate along $\gamma$} if there exists a nontrivial Jacobi field $J$ along $\gamma$ that vanishes at $a$ and $b$. If $S$ is a submanifold of $N$ and $\gamma : [a,b] \to N$ is a geodesic orthogonal to $S$ at $\gamma(a)$, then an \textbf{$S$-Jacobi field along $\gamma$} is the variation field of a variation of $\gamma$ through geodesics that are initially perpendicular to $S$, and $\gamma(b)$ is \textbf{focal to $S$ along $\gamma$} if there exists a nontrivial $S$-Jacobi field along $\gamma$ that vanishes at $b$. One says that $N$ has \textbf{no conjugate points} if no two points are conjugate along any geodesic connecting them and \textbf{no focal points} if no point is focal to a totally geodesic submanifold along any geodesic connecting them. If $\pi : \hat{N} \to N$ is the Riemannian universal covering space of $N$, then $N$ has no conjugate points or no focal points if and only if $\hat{N}$ has the same property. A refinement of the classical Cartan--Hadamard theorem states that manifolds with no focal points, as a class, lie between those with nonpositive curvature and those with no conjugate points. This may be seen from the following well-known result \cite{O'Sullivan1974}.

\begin{lemma}[(O'Sullivan)]\label{jacobi fields}
    Let $N$ be a complete Riemannian manifold. Then the following hold:\\
    \textbf{(a)} $N$ has nonpositive sectional curvature if and only if $\frac{d^2}{dt^2} \|J\|^2 \geq 0$ for all Jacobi fields $J$ along geodesics $\gamma : [0,\varepsilon) \to N$ and all $t$;\\
    \textbf{(b)} $N$ has no focal points if and only if $\frac{d}{dt} \|J\|^2 > 0$ for all nontrivial Jacobi fields $J$ along geodesics $\gamma : [0,\varepsilon) \to N$ satisfying $J(0) = 0$ and all $t > 0$;\\
    \textbf{(c)} $N$ has no conjugate points if and only if $\|J\|^2 > 0$ for all nontrivial Jacobi fields $J$ along geodesics $\gamma : [0,\varepsilon) \to N$ satisfying $J(0) = 0$ and all $t > 0$.
\end{lemma}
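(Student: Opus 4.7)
The plan is to extract everything from the two identities
\begin{equation*}
    \frac{d}{dt}\|J\|^2 = 2\langle J', J\rangle, \qquad \frac{d^2}{dt^2}\|J\|^2 = 2\|J'\|^2 - 2\langle R(J,\gamma')\gamma', J\rangle,
\end{equation*}
both direct consequences of $J'' + R(J,\gamma')\gamma' = 0$. For part (a), nonpositive sectional curvature immediately makes the right side of the second identity nonnegative; conversely, given orthogonal vectors $v,w \in T_pN$, the Jacobi field along the geodesic with $\gamma'(0)=v$ satisfying $J(0)=w$ and $J'(0)=0$ has, at $t=0$, second derivative $-2\langle R(w,v)v,w\rangle$, which pins down the sign of the sectional curvature of the plane spanned by $v$ and $w$. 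Part (c) amounts to translating the definition of conjugate points into Jacobi-field language, once a point is viewed as a zero-dimensional totally geodesic submanifold whose $S$-Jacobi fields are precisely the Jacobi fields vanishing at $0$.

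Part (b) carries the content. I would rely on the variational description of $S$-Jacobi fields along a geodesic $\gamma$ meeting a totally geodesic submanifold $S$ orthogonally at $\gamma(0)=p$: a Jacobi field $J$ is an $S$-Jacobi field if and only if $J(0) \in T_pS$ and $J'(0) \perp T_pS$, so in particular $\langle J(0), J'(0)\rangle = 0$. This follows by differentiating the orthogonality of the starting velocity along a curve in $S$ and using that $\nabla_{\alpha'}X$ remains tangent to $S$ when $X$ does, since $S$ is totally geodesic.

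For the forward direction of (b), suppose $N$ has no focal points and let $J$ be a nontrivial Jacobi field with $J(0)=0$. Decomposing $J = J^T + J^\perp$ into components tangent and normal to $\gamma$, each summand is again Jacobi, and $\|J^T\|^2$ is a nonnegative multiple of $t^2$ whose derivative is strictly positive for $t>0$ whenever $J^T \neq 0$; this reduces the problem to $J \perp \gamma'$. Since the second derivative of $\|J\|^2$ at $t=0$ equals $2\|J'(0)\|^2 > 0$, the derivative $\frac{d}{dt}\|J\|^2$ is positive on a right-neighborhood of $0$, so if it ever fails to be positive then by continuity there is a first $t_1 > 0$ with $\langle J'(t_1), J(t_1)\rangle = 0$. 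If $J(t_1)=0$, then $\gamma(t_1)$ is focal to the point $\{\gamma(0)\}$, contradicting the hypothesis. Otherwise, let $S$ be the geodesic through $\gamma(t_1)$ in the direction of $J(t_1)$; the reversed geodesic $\tilde\gamma(s) = \gamma(t_1 - s)$ meets $S$ orthogonally at $s=0$, and $\tilde J(s) = J(t_1 - s)$ has $\tilde J(0) = J(t_1) \in T_{\gamma(t_1)}S$ and $\tilde J'(0) = -J'(t_1) \perp J(t_1) = T_{\gamma(t_1)}S$, so it is a nontrivial $S$-Jacobi field with $\tilde J(t_1) = J(0) = 0$, again a contradiction.

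For the converse, suppose the derivative condition holds but some nontrivial $S$-Jacobi field $J$ along a geodesic $\gamma$ orthogonal to a totally geodesic $S$ at $\gamma(0)$ vanishes at some $t_0 > 0$. Then $\tilde J(s) = J(t_0 - s)$ along the reversed geodesic is a nontrivial Jacobi field with $\tilde J(0) = 0$, so by hypothesis $\frac{d}{ds}\|\tilde J\|^2 > 0$ for every $s > 0$; evaluating at $s = t_0$ yields $-2\langle J'(0), J(0)\rangle$, which vanishes by the $S$-Jacobi boundary identity, a contradiction. The main obstacle I anticipate is setting up the one-dimensional totally geodesic $S$ in the forward direction of (b) and checking that the reversed field genuinely realizes as an $S$-Jacobi field; once that boundary identity is in hand, the rest is sign-tracking.
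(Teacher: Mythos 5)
The paper does not prove this lemma; it is stated as a known result and cited to O'Sullivan, so there is no in-paper argument to compare against. Your proof is correct and is essentially the standard one. Parts (a) and (c) are handled exactly as one should: the second-variation identity plus the initial-value choice $J(0)=w$, $J'(0)=0$ for the converse of (a), and a direct translation of the definition for (c). For (b), the reduction to $J\perp\gamma'$ via the affine tangential component, the positivity of $\tfrac{d}{dt}\|J\|^2$ near $t=0$ from $\tfrac{d^2}{dt^2}\|J\|^2(0)=2\|J'(0)\|^2>0$, and the reversal trick at the first critical time $t_1$ of $\langle J',J\rangle$ to manufacture a focal point of the short geodesic segment through $\gamma(t_1)$ in the direction $J(t_1)$ are all sound; the converse direction via the boundary identity $\langle J(0),J'(0)\rangle=0$ is likewise correct. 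Two small remarks. First, the subcase $J(t_1)=0$ is actually vacuous: on $(0,t_1)$ the function $\|J\|^2$ is increasing by minimality of $t_1$, so $\|J(t_1)\|=0$ would force $J\equiv 0$; alternatively, under the paper's definition a point is a totally geodesic submanifold, so your appeal to focality of $\{\gamma(0)\}$ is also legitimate and not circular. Second, the one step you only sketch is the direction of the $S$-Jacobi characterization you actually need in the forward argument — namely that a Jacobi field with $J(0)\in T_pS$ and $J'(0)\perp T_pS$ \emph{is realized} by a variation through geodesics initially perpendicular to the totally geodesic $S$; this requires building the variation $f(s,t)=\exp_{\alpha(s)}(tV(s))$ with $V$ normal along a curve $\alpha$ in $S$, and works precisely because total geodesy kills the tangential part of $\tfrac{DV}{ds}$. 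That is standard and you correctly flag it as the crux, so I regard it as a legitimate citation to a known fact rather than a gap.
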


\noindent It follows that $N$ has nonpositive curvature if and only if the distance function $d(\cdot,\cdot) : \hat{N} \times \hat{N} \to [0,\infty)$ is convex, no focal points if and only if $d(\cdot,\hat{p})$ is convex for each $\hat{p} \in \hat{N}$, and no conjugate points if and only if $d(\cdot,\hat{p})$ is nonsingular for all $\hat{p} \in \hat{N}$. In particular, $N$ has no focal points if and only if every open ball $B(\hat{p},R) \subseteq \hat{N}$ is strongly convex. In terms of geometric radiuses, $N$ has no conjugate points if and only if $\inj(\hat{N}) = \infty$, while $N$ has no focal points if and only if $r(\hat{N}) = \infty$ or, equivalently, $\rho(\hat{N}) = \infty$. Note that Gulliver \cite{Gulliver1975} constructed examples of compact manifolds with no conjugate points but focal points and, respectively, no focal points but some positive curvature.

It is a result of Hermann \cite{Hermann1963} that, whenever $S$ is a closed and connected submanifold of $N$ to which no point of $N$ is focal, the restriction of the exponential map to the normal bundle $S^\perp$ is a covering map. In particular, when $N$ has no conjugate points, $\exp_{\hat{p}} : T_{\hat{p}} \hat{N} \to \hat{N}$ is a diffeomorphism for each $\hat{p} \in \hat{N}$, so $\pi_1(N)$ is torsion free and, moreover, $N$ is an Eilenberg--Mac Lane space $K(\pi_1(N),1)$. The following is an early result of Busemann \cite{Busemann1955} about manifolds with no conjugate points.

\begin{lemma}[(Busemann)]\label{closed geodesics have the same length}
    Let $N$ be a complete Riemannian manifold with no conjugate points. Then any closed geodesic in $N$ minimizes length in its free homotopy class.
\end{lemma}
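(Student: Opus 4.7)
The plan is to pass to the universal cover $\pi : \hat{N} \to N$ and exploit the fact that, when $N$ has no conjugate points, every geodesic in $\hat{N}$ realizes distance. Indeed, since $N$ (hence $\hat{N}$) has no conjugate points, $\exp_{\hat{p}} : T_{\hat{p}}\hat{N} \to \hat{N}$ has no critical values; combined with completeness and simple connectivity of $\hat{N}$, this makes $\exp_{\hat{p}}$ a diffeomorphism, so any two points of $\hat{N}$ are joined by a unique geodesic. By Hopf--Rinow there is some minimizing geodesic, and by uniqueness this is the only one, so in $\hat{N}$ every geodesic segment is length-minimizing between its endpoints.

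Let $\gamma$ be a closed geodesic in $N$ of length $T$ and let $\alpha$ be a closed curve freely homotopic to $\gamma$, which I parametrize on $[0,T]$ to match the periodicity of $\gamma$. I would lift the free homotopy $H : [0,1] \times \real/T\mathbb{Z} \to N$ with $H(0,\cdot) = \gamma$, $H(1,\cdot) = \alpha$ to $\tilde{H} : [0,1] \times \real \to \hat{N}$. By construction there is a single deck transformation $\phi \in \pi_1(N)$ such that $\tilde{H}(s,t+T) = \phi\bigl(\tilde{H}(s,t)\bigr)$ for all $s,t$. Writing $\hat{\gamma}(t) = \tilde{H}(0,t)$ and $\hat{\alpha}(t) = \tilde{H}(1,t)$, I obtain $\hat{\gamma}(T) = \phi\bigl(\hat{\gamma}(0)\bigr)$ and $\hat{\alpha}(T) = \phi\bigl(\hat{\alpha}(0)\bigr)$. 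Since $\hat{\gamma}$ is a geodesic and the covering is a local isometry, the first paragraph gives $d\bigl(\hat{\gamma}(0), \phi^n(\hat{\gamma}(0))\bigr) = d\bigl(\hat{\gamma}(0), \hat{\gamma}(nT)\bigr) = nT$ for every $n \in \nat$, while $L(\alpha) = L(\hat{\alpha}) \geq d\bigl(\hat{\alpha}(0), \phi(\hat{\alpha}(0))\bigr)$.

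To finish, I would show that the displacement $\hat{p} \mapsto d(\hat{p},\phi(\hat{p}))$ is at least $T$ at every $\hat{p} \in \hat{N}$, not merely on the axis. Set $\hat{p} = \hat{\alpha}(0)$ and $C = d\bigl(\hat{p}, \hat{\gamma}(0)\bigr)$. Because $\phi$ acts by isometries, $d\bigl(\phi^n(\hat{p}), \phi^n(\hat{\gamma}(0))\bigr) = C$, so the triangle inequality yields
\[
    d\bigl(\hat{p}, \phi^n(\hat{p})\bigr) \;\geq\; d\bigl(\hat{\gamma}(0), \phi^n(\hat{\gamma}(0))\bigr) - 2C \;=\; nT - 2C.
\]
On the other hand, iterating the triangle inequality along the orbit of $\hat{p}$ and again using that $\phi$ is an isometry gives $d\bigl(\hat{p}, \phi^n(\hat{p})\bigr) \leq n\,d\bigl(\hat{p}, \phi(\hat{p})\bigr)$. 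Combining and letting $n \to \infty$ forces $d\bigl(\hat{p}, \phi(\hat{p})\bigr) \geq T$, hence $L(\alpha) \geq T = L(\gamma)$.

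The only genuinely delicate point is the first paragraph: one must know that geodesics in $\hat{N}$ globally minimize length, which is what no conjugate points (together with completeness and simple connectivity) is used for. Everything after that is a standard orbit-averaging argument via an isometric action of $\langle \phi \rangle$, and no convexity of the distance function is needed — which is important, since unlike in the no focal points setting the displacement $d(\cdot,\phi(\cdot))$ need not be convex here.
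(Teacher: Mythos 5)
Your proof is correct and follows essentially the same route as the paper's sketch: lift to $\hat{N}$, use that no conjugate points makes every geodesic in $\hat{N}$ the unique, hence minimal, path between its endpoints, and then compare the $n$-fold iterates of $\gamma$ and $\alpha$ via the triangle inequality, letting $n \to \infty$. The only difference is cosmetic — you phrase the comparison through the displacement function of $\phi$ rather than as a direct contradiction with a too-short path, and your closing observation that no convexity of the displacement is needed is accurate.
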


\noindent The proof of Lemma \ref{closed geodesics have the same length} is elegant and demonstrates the synthetic nature of many arguments in the area: If there were a loop in $N$ shorter than a closed geodesic in its free homotopy class, then one could iterate both a sufficiently large number of times, lift a homotopy connecting those iterates to $\hat{N}$, and thereby produce a path connecting two points shorter than the unique geodesic between them.

The \textbf{displacement function} of an isometry $\phi$ of $\hat{N}$ is the map $x \mapsto d(x,\phi(x))$, and the \textbf{minimum set} of $\phi$, denoted $\min(\phi)$, is the minimum set of its displacement function. An \textbf{axis} of $\phi$ is a geodesic $\gamma : \real \to \hat{N}$ such that, for some $t_0$, $\phi(\gamma(t)) = \gamma(t + t_0)$ for all $t \in \real$. Canonically associated to each $g \in \pi_1(N)$ is an isometric deck transformation $\phi_g$ of $\hat{N}$. An \textbf{axis} of $g$ is an axis of $\phi_g$, and the \textbf{minimum set} of $g$, denoted $\min(g)$, is the minimum set of $\phi_g$. The axes of $g$ are exactly the lifts of closed geodesics in the free homotopy class determined by $g$, and, by Lemma \ref{closed geodesics have the same length}, $\min(g)$ is the union of the axes of $g$. If $G$ is a subgroup of $\pi_1(N)$, then $\min(G) = \cap_{g \in G} \min(g)$. The following theorem of O'Sullivan \cite{O'Sullivan1976}, known as the flat torus theorem, generalizes a result of Gromoll--Wolf \cite{GromollWolf1971} and, independently, Lawson--Yau \cite{LawsonYau1972} for nonpositively curved manifolds.

\begin{theorem}[(O'Sullivan)]\label{flat torus theorem}
    Let $N$ be a complete Riemannian manifold with no focal points and $G$ an Abelian subgroup of $\pi_1(N)$ of rank $m$. Then the following hold:

    \vspace{2pt}

    \noindent \textbf{(a)} $\min(G)$ is isometric to $D \times \real^m$ for some closed and strongly convex subset $D$ of $\hat{N}$;

    \vspace{2pt}

    \noindent \textbf{(b)} $G$ acts by translation on the $\real^m$-fibers of $\min(G) \cong D \times \real^m$;

    \vspace{2pt}

    \noindent \textbf{(c)} The restriction of $\pi : \hat{N} \to N$ to each of the $\real^m$-fibers of $\min{G} \cong D \times \real^m$ descends to an isometric and totally geodesic immersion from a flat torus $\tor^m$ into $N$ whose induced homomorphism has, up to path-conjugation, image $G$;

    \vspace{2pt}

    \noindent \textbf{(d)} If $N$ is compact, then $D$ is nonempty.
\end{theorem}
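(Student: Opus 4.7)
The plan is to adapt the classical flat torus theorem for nonpositive curvature, replacing each use of joint convexity of distance with the no-focal-points consequences of Lemma \ref{jacobi fields}. Write $d_g(x) = d(x, \phi_g x)$ for the displacement function of $g \in G$. First I would show that $d_g$ is convex on $\hat{N}$ and that, when nonempty, $\min(g)$ is closed and strongly convex, and is the union of the axes of $g$. Convexity of $d_g$ along a geodesic reduces to the statement that for any two geodesics $\gamma_1, \gamma_2 : \real \to \hat{N}$ the map $t \mapsto d(\gamma_1(t), \gamma_2(t))$ is convex, which follows by comparing the family of connecting minimal geodesics to an $S$-Jacobi variation and invoking the monotonicity in Lemma \ref{jacobi fields}(b). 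Since balls of all radii in $\hat{N}$ are strongly convex (noted after Lemma \ref{jacobi fields}), the minimizing segment from $x$ to $\phi_g x$ is unique, so concatenating its $\langle \phi_g \rangle$-translates yields an axis of $g$ through each $x \in \min(g)$.

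Next, pick generators $g_1, \ldots, g_m$ of $G$ (torsion-free since $\hat{N}$ is contractible). Commutativity gives $d_{g_j} \circ \phi_{g_i} = d_{g_j}$, so each $\phi_{g_i}$ preserves every $\min(g_j)$ and hence preserves the closed strongly convex intersection $\min(G) = \bigcap_j \min(g_j)$. The technical heart of (a) is the parallel-axis step: for each $g_i$, any two axes $\alpha$ and $\beta$ of $g_i$ satisfy $d(\alpha(t), \beta(t)) \equiv \textrm{const}$, since this distance function is both convex by the first paragraph and periodic under the common translation generated by $\phi_{g_i}$; the equality case of the Jacobi-field monotonicity in Lemma \ref{jacobi fields}(b) then forces the connecting variation fields to be parallel and the strip between $\alpha$ and $\beta$ to be flat and totally geodesic. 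Assembling these strips decomposes $\min(g_i) \cong D_i \times \real$ with $g_i$ acting by translation on the $\real$ factor; iterating over $i$, using that translations from distinct generators commute and act on independent parallel families, produces $\min(G) \cong D \times \real^m$ with $G$ acting as a translation lattice, yielding (a) and (b).

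For (c), since $G \cong \mathbb{Z}^m$ acts as a cocompact lattice on each $\real^m$-fiber, the quotient is a flat torus $\tor^m$, and the restriction of $\pi$ to such a fiber descends to an isometric totally geodesic immersion $\tor^m \to N$; the image of the induced $\pi_1$-map is $G$ up to the choice of basepoint lift, which accounts for the path-conjugation indeterminacy. For (d), if $N$ is compact, each generator $g_i$ has a shortest loop representative in its free homotopy class, legitimized as a closed geodesic via Lemma \ref{closed geodesics have the same length}, which provides an axis and hence $\min(g_i) \neq \emptyset$; to obtain $\min(G) \neq \emptyset$ I would induct on $m$, noting that after splitting $\min(g_1) \cong D_1 \times \real$ the remaining $m-1$ generators commute with translation along $\real$ and therefore act on the closed strongly convex set $D_1$, which inherits the ambient no-focal-points convexity properties well enough to run the inductive step.

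I expect the flat-strip step to be the principal obstacle: in nonpositive curvature it is an immediate joint-convexity argument, whereas with only no focal points one must carefully extract the equality case in Lemma \ref{jacobi fields}(b) and integrate it along the family of connecting geodesics to obtain a genuine isometric product decomposition rather than merely a measurable or infinitesimal one.
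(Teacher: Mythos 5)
The paper does not prove this theorem; it is quoted verbatim from O'Sullivan \cite{O'Sullivan1976}, so your sketch can only be judged on its own merits. Judged that way, it has a fatal gap exactly where you predicted the difficulty would be, but the problem is worse than you acknowledge: your foundational step is false. You reduce convexity of the displacement function, and later the flat-strip step, to the claim that $t \mapsto d(\gamma_1(t),\gamma_2(t))$ is convex for \emph{arbitrary} pairs of geodesics in $\hat N$, "by comparing the family of connecting minimal geodesics to an $S$-Jacobi variation and invoking Lemma \ref{jacobi fields}(b)." But joint convexity of $d(\cdot,\cdot)$ on $\hat N \times \hat N$ is \emph{equivalent} to nonpositive curvature -- the paper says so explicitly in the paragraph following Lemma \ref{jacobi fields} -- and Gulliver's examples show there are manifolds with no focal points and some positive curvature. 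Lemma \ref{jacobi fields}(b) controls only Jacobi fields with $J(0)=0$; the variation field of the family of minimal segments joining $\gamma_1(t)$ to $\gamma_2(t)$ vanishes at neither endpoint, so the monotonicity (and its "equality case," which you invoke for the flat strip) simply does not apply. Your argument, if it worked, would prove the Gromoll--Wolf/Lawson--Yau theorem and nothing more; it cannot see the extra generality that is the entire point of O'Sullivan's result.

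The step can be repaired, but by a genuinely different mechanism, which is why O'Sullivan's proof is not a routine transcription of the nonpositive-curvature one. The usable convexity in a manifold with no focal points is that of the distance to a complete \emph{totally geodesic submanifold}: the defining condition concerns $S$-Jacobi fields, equivalently Jacobi fields with $J'(0)=0$ rather than $J(0)=0$, and this yields that $t\mapsto d(\gamma_1(t),\gamma_2(\real))$ is convex. For two axes of the same deck transformation this function is also periodic, hence constant, and analyzing the equality case of \emph{that} monotonicity (not of Lemma \ref{jacobi fields}(b)) produces the flat totally geodesic strip. Your remaining architecture -- axes via Busemann's Lemma \ref{closed geodesics have the same length}, invariance of $\min(g_j)$ under commuting $\phi_{g_i}$, the splitting $\min(G)\cong D\times\real^m$, descent to an immersed flat torus, and the induction for nonemptiness when $N$ is compact -- is the standard skeleton and would go through once the flat-strip lemma is established by the correct route, though the inductive step in (d) also needs the observation that $D_1$ is itself a closed strongly convex set on which the residual group acts with compact quotient, which deserves more than the one clause you give it.
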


\noindent It is known that the flat torus theorem may fail to hold for compact manifolds with no conjugate points \cite{KleinerUnpublished}, in that not every Abelian subgroup of $\pi_1(N)$ need be represented by a totally geodesic flat torus.

\subsection{Spaces of geodesic loops}

Since a complete manifold with no conjugate points is an Eilenberg--Mac Lane space, maps into it are determined up to homotopy by what they do at the level of fundamental group. In particular, one has the following.

\begin{lemma}\label{maps determined by fundamental group}
    Let $N$ be a complete Riemannian manifold with no conjugate points, $f,g : \tor^k \to N$ continuous maps, $\theta \in \tor^k$ a basepoint for $\pi_1(\tor^k)$, and $\{[s_1],\ldots,[s_k]\}$ a minimal generating set for $\pi_1(\tor^k)$. Then $f$ is homotopic to $g$ if and only if $f_*([s_1],\ldots,[s_k]) \cong g_*([s_1],\ldots,[s_k])$, in the sense that there exists a path $\alpha$ from $f(\theta)$ to $g(\theta)$ such that, for each $i$, conjugating $[f \circ s_i]$ by $\alpha$ yields $[g \circ s_i]$.
\end{lemma}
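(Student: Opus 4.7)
The plan is to exploit the fact that, as noted in Subsection 3.2, the no conjugate points hypothesis on $N$ implies that $\exp_{\hat{p}} : T_{\hat{p}} \hat{N} \to \hat{N}$ is a diffeomorphism for every $\hat{p} \in \hat{N}$, so $\hat{N}$ is contractible and $N$ is an Eilenberg--Mac Lane space $K(\pi_1(N), 1)$. In particular $\pi_n(N) = 0$ for all $n \geq 2$, which will make the constructive half of the proof go through by the standard obstruction argument.

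For the forward direction, suppose $H \colon [0,1] \times \tor^k \to N$ is a homotopy from $f$ to $g$ and set $\alpha(t) = H(t,\theta)$, a path from $f(\theta)$ to $g(\theta)$. For each $i$, restricting $H$ to the cylinder $[0,1] \times s_i([0,1])$ is a free homotopy of loops whose basepoint trace is $\alpha$; a standard path-concatenation computation then yields $[g \circ s_i] = [\alpha^{-1}] \cdot [f \circ s_i] \cdot [\alpha]$ in $\pi_1(N, g(\theta))$, which is the desired conjugation identity. This direction uses nothing about $N$ beyond being a topological space.

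For the reverse direction, I would first reduce to the basepoint-preserving case. Because $\{\theta\} \hookrightarrow \tor^k$ is a cofibration, the homotopy extension property lets me extend the homotopy $t \mapsto \alpha(t)$ at $\theta$ to a homotopy of $f$ itself, producing a map $f'$ homotopic to $f$ with $f'(\theta) = g(\theta)$ and $f'_*([s_i]) = [\alpha^{-1}] \cdot f_*([s_i]) \cdot [\alpha] = g_*([s_i])$ for each $i$. Since $\pi_1(\tor^k, \theta)$ is generated by $[s_1], \ldots, [s_k]$, the homomorphisms $f'_*$ and $g_*$ into $\pi_1(N, g(\theta))$ coincide. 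I would then build a homotopy from $f'$ to $g$ cell-by-cell on the standard CW structure of $\tor^k$ (one $0$-cell, $k$ $1$-cells, and higher cells): on $\theta$ use the constant homotopy at $g(\theta)$; on each $1$-cell, extend using that $f' \circ s_i$ and $g \circ s_i$ represent the same element of $\pi_1(N, g(\theta))$; and on each cell of dimension $\geq 2$, extend using $\pi_n(N) = 0$ to fill in the resulting map $S^n \to N$ by a disk.

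The principal subtlety lies in the basepoint bookkeeping of the reduction step, since the conjugation by $\alpha$ in the hypothesis must match exactly the change-of-basepoint isomorphism that arises from the homotopy extension. The cellular extension itself, once one is in the basepoint-preserving setting, is purely obstruction-theoretic and relies on the asphericity of $N$ rather than any geometric input; the no conjugate points assumption enters only through the contractibility of $\hat{N}$. Since this is a standard classification fact for maps into $K(\pi, 1)$ spaces, I anticipate the proof in the paper will be quite short, perhaps even citing the Eilenberg--Mac Lane classification directly.
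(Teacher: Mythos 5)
Your proof is correct and matches the paper's primary justification: the lemma is presented as an immediate consequence of $N$ being an Eilenberg--Mac Lane space $K(\pi_1(N),1)$ (its universal cover being diffeomorphic to $\mathbb{R}^n$ via $\exp_{\hat{p}}$), which is exactly the basepoint reduction plus obstruction-theoretic extension you carry out. The paper also sketches an alternate, constructive proof --- interpolating along geodesics in $\hat{N}$ via $\exp^{-1}$ and factoring through the loop maps $\Upsilon_{v_1 \otimes \cdots \otimes v_k}$ --- but this is offered because those explicit homotopies are reused later (in the proof of Theorem \ref{main theorem}(a)), not because the topological argument is insufficient.
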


\noindent Lemma \ref{maps determined by fundamental group} may also be proved more constructively. This second approach will help clarify the proof of Theorem \ref{main theorem for nnrc}(a).

Denote by $T^k N \xrightarrow{\pi} N$ the tensor bundle that attaches to each $y \in N$ the set $T^k_y = \{ v_1 \otimes \cdots \otimes v_k \st v_i \in T_y N \}$ of $(k,0)$-tensors at $y$. Let
\[
    N_k = \{ v_1 \otimes \cdots \otimes v_k \in T^k N \st \exp(v_1) = \cdots = \exp(v_k) \}\mathop{.}
\]
That is, $N_k$ consists of the $k$-tensors whose components are the initial vectors of geodesic loops. Since $0_y \otimes \cdots \otimes 0_y \in N_k$ for all $y \in N$, $N_k$ is nonempty. Define $f : T^k N \to N^{k+1}$ by
\[
    f(v_1 \otimes \cdots \otimes v_k) = (\pi(v_1 \otimes \cdots \otimes v_k),\exp(v_1),\ldots,\exp(v_k))\textrm{.}
\]
When $N$ has no conjugate points, the derivative of $\exp_{\pi(v)}$ is nonsingular at each $v \in TN$; that is, $D_v \exp_{\pi(v)} : T_v(T_{\pi(v)} N) \to T_{\exp_v} N$ is a linear isomorphism. It follows that $f$ has constant rank $(k+1)\dim(N)$. Denote by $D = \{ (y,\ldots,y) \in N^{k+1} \st y \in N \}$ the diagonal in $N^{k+1}$. Then the inverse function theorem implies that $N_k = f^{-1}(D)$ is an embedded $C^1$ submanifold of $T^k N$ of dimension $\dim(N)$.

For a fixed $v = v_1 \otimes \cdots \otimes v_k \in N_k$, let $w = (0,w_1,\ldots,w_k) \in T_v N_k \subseteq T_v (T^k N) \cong T_{\pi(v)} N \times T_{v_1}(T_{\pi(v)} N) \times \cdots \times T_{v_k}(T_{\pi(v)} N)$. By the definition of $N_k$,
\[
    D_{v_i} \exp_{\pi(v_1 \otimes \cdots \otimes v_k)}(w_i) = D_{v_1 \otimes \cdots \otimes v_k} \pi|_{N_k}(w) = 0
\]
for all $i$. It follows that the restriction of $\pi$ to $N_k$ has constant rank $\dim(N)$ and, consequently, $\pi|_{N_k} : N_k \to N$ is a local diffeomorphism.

Endow $N_k$ with the pull-back metric from $\pi|_{N_k}$, and define $C^1$ length functions $L_i : N_k \to [0,\infty)$ by $L_i(v_1 \otimes \cdots \otimes v_k) = \|v_i\|$. By the first variation formula, the gradient of each $L_i$ satisfies $\| \nabla L_i \| < 2$. Thus $N_k$ is complete and, consequently, $\pi|_{N_k}$ is a covering map.

In what follows, a basepoint $p \in N$ will be assumed for $\pi_1(N)$. If $[\sigma] \in \pi_1(N)$, then, since $N$ has no conjugate points, there exists a unique $v \in T_p N$ such that the map $\gamma_v(t) = \exp(tv)$, for $t \in [0,1]$, is a geodesic loop in $[\sigma]$. For any $\Sigma = ([\sigma_1],\ldots,[\sigma_k]) \in \pi_1(N)^k$, there exists a unique $u_1 \otimes \cdots \otimes u_k \in N_k$ such that each $u_i$ is, in this way, the initial vector of a geodesic loop $\gamma_{u_i} \in [\sigma_i]$. Denote by $N_\Sigma$ the connected component of $N_k$ containing $u_1 \otimes \cdots \otimes u_k$ and by $\pi_\Sigma$ the restriction of $\pi$ to $N_\Sigma$.

\begin{theorem}\label{structure theorem for geodesic loops}
    Let $N$ be a complete and connected Riemannian manifold with no conjugate points and $\Sigma = ([\sigma_1],\ldots,[\sigma_k]) \in \pi_1(N)^k$. Then the following hold:

    \vspace{2pt}

    \noindent \textbf{(a)} $N_\Sigma$ is a submanifold of $T^k N$;

    \vspace{2pt}

    \noindent \textbf{(b)} The projection $\pi_\Sigma : N_\Sigma \to N$ is a $C^1$ covering map;

    \vspace{2pt}

    \noindent \textbf{(c)} The fundamental group of $N_\Sigma$ satisfies
    \[
        (\pi_\Sigma)_*(\pi_1(N_\Sigma,v_1 \otimes \cdots \otimes v_k)) = Z([\gamma_{v_1}],\ldots,[\gamma_{v_k}])
    \]
    for each $v_1 \otimes \cdots \otimes v_k \in N_\Sigma$, where $Z$ denotes the centralizer.
\end{theorem}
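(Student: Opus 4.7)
The plan is to obtain (a) and (b) as immediate corollaries of the structural results established in the paragraphs immediately preceding the statement, and to reserve the real work for (c), which follows from a direct analysis of path-lifting in the covering $\pi|_{N_k}$.

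For (a), the preceding discussion already shows that $N_k$ is an embedded $C^1$ submanifold of $T^k N$ of dimension $\dim(N)$; since $N_\Sigma$ is by definition a connected component of $N_k$, it is open in $N_k$ and therefore also an embedded $C^1$ submanifold of $T^k N$. For (b), the same discussion shows that $\pi|_{N_k} : N_k \to N$ is a $C^1$ covering map in the pull-back metric. The restriction of a covering map to a connected component of its domain is a covering onto its image, and this image is open (since $\pi_\Sigma$ is a local homeomorphism) and closed (a point in the closure admits an evenly covered neighborhood, and each connected sheet over it that meets $N_\Sigma$ lies entirely inside $N_\Sigma$), so by connectedness of $N$ it is all of $N$.

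For (c), let $u = u_1 \otimes \cdots \otimes u_k \in N_\Sigma$ be the reference tensor, set $p = \pi(u)$, and let $\alpha : [0,1] \to N$ be a loop at $p$. Lift $\alpha$ to the unique path $\tilde{\alpha}(t) = v_1(t) \otimes \cdots \otimes v_k(t)$ in $N_\Sigma$ starting at $u$ provided by (b). By the definition of $N_k$, each $v_i(t) \in T_{\alpha(t)} N$ satisfies $\exp(v_i(t)) = \alpha(t)$, so $\gamma_{v_i(t)}(s) = \exp(s v_i(t))$ is a geodesic loop at $\alpha(t)$ that depends continuously on $(t,s)$. The element $[\alpha|_{[0,t]} \cdot \gamma_{v_i(t)} \cdot \overline{\alpha|_{[0,t]}}] \in \pi_1(N,p)$ therefore depends continuously on $t$ in a discrete group, so it is constant; its value at $t = 0$ is $[\gamma_{u_i}] = [\sigma_i]$, so at $t = 1$ it gives $[\gamma_{v_i(1)}] = [\alpha]^{-1} [\sigma_i] [\alpha]$.

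Because $N$ has no conjugate points, the excerpt has already noted that $\exp_{\hat p} : T_{\hat p} \hat{N} \to \hat{N}$ is a diffeomorphism; combined with the identification of $\pi^{-1}(p) \subseteq \hat N$ with the orbit $\pi_1(N,p) \cdot \hat p$, this makes the assignment sending a class in $\pi_1(N,p)$ to the initial vector in $T_p N$ of its unique geodesic-loop representative a bijection onto $\{ w \in T_p N : \exp_p(w) = p \}$. Consequently, $v_i(1) = u_i$ if and only if $[\alpha]^{-1}[\sigma_i][\alpha] = [\sigma_i]$, and $\tilde{\alpha}$ closes up to a loop in $N_\Sigma$ exactly when this occurs for every $i$, yielding the desired equality $(\pi_\Sigma)_*(\pi_1(N_\Sigma,u)) = Z([\gamma_{u_1}],\ldots,[\gamma_{u_k}])$ at the reference basepoint; the statement at an arbitrary $v_1 \otimes \cdots \otimes v_k \in N_\Sigma$ then follows because changing basepoints conjugates both sides of the identity by the same element. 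The main obstacle I anticipate is the constancy claim for the conjugated homotopy class, which requires that $(t,s) \mapsto \exp(s v_i(t))$ be a genuinely continuous homotopy of loops with moving basepoints; this rests on the fact, established above, that $\pi|_{N_k}$ is a $C^1$ local diffeomorphism, so the lift $v_i(t)$ inherits enough regularity from $\alpha$ for the square $(t,s) \mapsto \exp(s v_i(t))$ to be jointly continuous.
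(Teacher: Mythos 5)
Your argument is correct and matches the paper's proof in all essentials: parts (a) and (b) are read off from the preceding construction of $N_k$ as a complete local isometric cover of $N$, and part (c) is proved by lifting a loop $\alpha$ to $N_\Sigma$, observing that the component geodesic loops $\gamma_{v_i(t)}$ sweep out a (free) homotopy that conjugates $[\gamma_{u_i}]$ by $[\alpha]$, and invoking the uniqueness of the geodesic-loop representative of a based homotopy class in a manifold with no conjugate points to conclude that the lift closes up exactly when $[\alpha]$ centralizes each $[\sigma_i]$. Your phrasing via local constancy of the conjugated class in the discrete group $\pi_1(N,p)$ is just a repackaging of the paper's homotopies $H_i(s,t) = \exp(t\tilde{\alpha}_i(s))$, and your explicit verification that the restriction of the covering to a connected component still covers all of $N$ fills in a detail the paper leaves implicit.
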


\begin{proof}
    Parts (a) and (b) have already been proved. Let $\tilde{\alpha} : [a,b] \to N_\Sigma$ be a loop based at $v_1 \otimes \cdots \otimes v_k$, and write $\tilde{\alpha} = \tilde{\alpha}_1 \otimes \cdots \otimes \tilde{\alpha}_k$ and $\alpha = \pi_\Sigma \circ \tilde{\alpha}$. Then $\tilde{\alpha}_i(a) = v_i = \tilde{\alpha}_i(b)$. Define $H_i : [a,b] \times [0,1] \to N$ by $H_i(s,t) = \exp(t\tilde{\alpha}_i(s))$. Since $H_i(a,\cdot) = \gamma_{v_i}(\cdot) = H_i(b,\cdot)$, $\alpha$ must commute with $\gamma_{v_i}$. Conversely, suppose $\alpha : [a,b] \to N$ is a loop based at $\pi_\Sigma(v_1 \otimes \cdots \otimes v_k)$ that commutes with each $\gamma_{v_i}$. Then $\alpha$ lifts via $\pi_\Sigma$ to a path $\tilde{\alpha}$ that begins at $v_1 \otimes \cdots \otimes v_k$. Each map $H_i$, defined as before, is a homotopy connecting $\gamma_{v_i}$ to a geodesic loop $H_i(b,\cdot)$ homotopic to $\gamma_{v_i}(\cdot)$. Since $N$ has no conjugate points, $H_i(b,\cdot) = \gamma_{v_i}(\cdot)$, and therefore $\tilde{\alpha}$ is a loop at $v_1 \otimes \cdots \otimes v_k$.
\end{proof}

\noindent In short, Theorem \ref{structure theorem for geodesic loops} states that $\pi_\Sigma: N_\Sigma \to N$ is a geometric realization of the covering space $\hat{N}/Z([\sigma_1],\ldots,[\sigma_k]) \to N$.

For any $v_1 \otimes \cdots \otimes v_k \in T^k N$, a loop map $\Upsilon_{v_1 \otimes \cdots \otimes v_k} : \tor^k \to N$ will be defined inductively. For a fixed $\theta = (\theta_1,\ldots,\theta_k) \in \tor^k$, identify the $i$-th factor of $\tor^k = S^1 \times \cdots \times S^1$ with $[0,1]/\sim$ in such a way that $\theta_i$ corresponds to $0$, and let $s_i$ be the corresponding closed geodesic. Then $\{[s_1],\ldots,[s_k]\}$ is a minimal generating set for $\pi_1(\tor^k,\theta)$. Define $\upsilon_1 : S^1 \to N$ by $\upsilon_1(s) = \exp(sv_1)$. Suppose $\upsilon_i : \tor^i \to N$ has been defined and satisfies $(\upsilon_i)_*([s_j]) = [\gamma_{v_j}]$ for a fixed $1 \leq i < k$ and $j = 1,\ldots,i$. Then $\upsilon_i$ lifts to a map $\tilde{\upsilon}_i : \tor^i \to N_{([\gamma_{v_1}],\ldots,[\gamma_{v_i}])}$, and one may define $\upsilon_{i+1} : \tor^{i+1} \to N$ by $\upsilon_{i+1}(s_1,\ldots,s_{i+1}) = \exp(s_{i+1}\tilde{\upsilon}_i(s_1,\ldots,s_i))$. Let $\Upsilon_{v_1 \otimes \cdots \otimes v_k} = \upsilon_k$.

These loop maps may be used to construct an explicit homotopy between any two maps $f,g : \tor^k \to N$ that, up to path-conjugation, induce the same homomorphism on $\pi_1(\tor^k)$. Let $u_1 \otimes \cdots \otimes u_k \in T^k N$ be the unique tensor such that $\gamma_{u_i} \in f_*([s_i])$ for each $i$, and let $\Sigma = ([\gamma_{u_1}],\ldots,[\gamma_{u_k}])$. Note that $f$ lifts canonically to a map $\tilde{f} = \tilde{f}_1 \otimes \cdots \otimes \tilde{f}_k: \tor^k \to N_\Sigma$ satisfying $\tilde{f}(\theta) = u_1 \otimes \cdots \otimes u_k$. Write $F_k = f$ and $F_0 = \Upsilon_{u_1 \otimes \cdots \otimes u_k}$. For $1 \leq i \leq k-1$, define $F_i : \tor^k \to N$ by
\[
    F_i(s_1,\ldots,s_k) = \Upsilon_{\tilde{f}_{i+1} \otimes \cdots \otimes \tilde{f}_k(s_1,\ldots,s_i,\theta_{i+1},\ldots,\theta_k)}(s_{i+1},\ldots,s_k)\textrm{.}
\]
Denote by $\varphi : \real^k \to \tor^k$ the universal covering map of $\tor^k$, and fix $\hat{\theta} \in \varphi^{-1}(\theta)$. Then $F_i$ and $F_{i-1}$, in turn, lift to maps $\hat{F}_i$ and $\hat{F}_{i-1}$ from $\real^k$ to $\hat{N}$ that agree at $\hat{\theta}$. The map $\hat{H}_i : [0,1] \times \real^k \to \hat{N}$ defined by
\[
    \hat{H}_i(s,\hat{\vartheta}) = \exp_{\hat{F}_i(\hat{\vartheta})}\big( s\exp^{-1}_{\hat{F}_i(\hat{\vartheta})}(\hat{F}_{i-1}(\hat{\vartheta})) \big)
\]
descends to a homotopy from $F_i$ to $F_{i-1}$. Through concatenation, one obtains a homotopy from $f$ to $\Upsilon_{u_1 \otimes \cdots \otimes u_k}$.

If $\alpha : [0,1] \to N$ is a path as in the statement of Lemma \ref{maps determined by fundamental group}, then $\alpha$ lifts to a path $\tilde{\alpha} : [0,1] \to N_\Sigma$ with $\tilde{\alpha}(0) = u_1 \otimes \cdots \otimes u_k$. As above, one may show that $g$ is homotopic to $\Upsilon_{\tilde{\alpha}(1)}$. The map $\tilde{H} : [0,1] \times \tor^k \to N$ defined by $\tilde{H}(s,\vartheta) = \Upsilon_{\tilde{\alpha}(s)}(\vartheta)$ is a homotopy from $\Upsilon_{u_1 \otimes \cdots \otimes u_k}$ to $\Upsilon_{\tilde{\alpha}(1)}$, which completes the alternate proof of Lemma \ref{maps determined by fundamental group}.

Denote by $C_\Sigma$ the subset of $N_\Sigma$ consisting of tensors containing the initial vectors of closed geodesics, that is,
\[
    C_\Sigma = \{ v_1 \otimes \cdots \otimes v_i \st \gamma_{v_i} \textrm{ is a closed geodesic for each } 1 \leq i \leq k \}\textrm{.}
\]
When $\Sigma = ([\sigma])$, write $N_{[\sigma]}$, $\pi_{[\sigma]}$, and $C_{[\sigma]}$ for $N_\Sigma$, $\pi_\Sigma$, and $C_\Sigma$, respectively.

Each $C_{[\sigma]}$ is the set of initial vectors of closed geodesics $[0,1] \to N$ freely homotopic to $\sigma$, also equal to the image under $\hat{N} \to N_{[\sigma]}$ of the axes of $[\sigma]$. Lemma \ref{closed geodesics have the same length} says that length is constant on $C_{[\sigma]}$, and the first variation formula implies that $C_{[\sigma]}$ is the set of critical points of the length functional on $N_{[\sigma]}$. The following combines two lemmas of Croke--Schroeder \cite{CrokeSchroeder1986}.

\begin{lemma}[(Croke--Schroeder)]\label{croke--schroeder}
    Let $N$ be a compact Riemannian manifold with no conjugate points and $[\sigma] \in \pi_1(N)$. Then $C_{[\sigma]}$ is compact and connected.
\end{lemma}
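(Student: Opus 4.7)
The plan is to prove compactness and connectedness of $C_{[\sigma]}$ separately, relying on Lemma \ref{closed geodesics have the same length} and on the covering-space structure of $N_{[\sigma]}$ established in Theorem \ref{structure theorem for geodesic loops}.

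For compactness, Lemma \ref{closed geodesics have the same length} forces every closed geodesic freely homotopic to $\sigma$ to have the common length $\ell$ equal to the infimum of loop lengths in $[\sigma]$, so $C_{[\sigma]} \subseteq L_1^{-1}(\ell)$. Viewing $N_{[\sigma]}$ as a submanifold of $TN$, every element of $C_{[\sigma]}$ has norm $\ell$, placing $C_{[\sigma]}$ inside the compact sphere bundle $\{v \in TN : \|v\| = \ell\}$. It then suffices to check that $C_{[\sigma]}$ is closed in $TN$. If $v_n \in C_{[\sigma]}$ and $v_n \to v$, continuity of $\exp$ and of the map $v \mapsto \gamma_v'(1)$ yields both the loop condition $\exp(v) = \pi(v)$ and the smooth closure $\gamma_v'(1) = v$. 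Since $N_1 = f^{-1}(D)$ is closed in $TN$ and each of its connected components is closed in $N_1$, the limit $v$ lies in $N_{[\sigma]}$ and therefore in $C_{[\sigma]}$.

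For connectedness, I would lift to the universal cover $\hat{N}$, where the deck transformation $\phi$ associated to $\sigma$ has minimum set $\min(\phi)$ equal to the union of its axes. Because $N$ has no conjugate points, $\exp_p : T_p \hat{N} \to \hat{N}$ is a diffeomorphism for each $p$, so the assignment $p \mapsto D_p\pi\bigl(\exp_p^{-1}(\phi(p))\bigr)$ defines a continuous map $\hat{N} \to N_{[\sigma]} \subseteq TN$ whose restriction to $\min(\phi)$ surjects onto $C_{[\sigma]}$. Thus connectedness of $C_{[\sigma]}$ reduces to connectedness of $\min(\phi)$. The natural attempt is: given two axes $\tilde{c}_0$ and $\tilde{c}_1$, join base points $p_0 \in \tilde{c}_0$ and $p_1 \in \tilde{c}_1$ by the unique $\hat{N}$-geodesic $\{p_t\}$, and argue that $d(p_t,\phi(p_t)) \equiv \ell$, so that each $p_t$ lies on an axis.

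That last equality is the main obstacle. Under nonpositive curvature or no focal points (Lemma \ref{jacobi fields}) it would follow from convexity of the displacement function or from the flat strip contained in Theorem \ref{flat torus theorem}, but neither tool is available under the weaker hypothesis of no conjugate points. The Croke--Schroeder approach is to study the asymptotic behavior of almost-minimizing segments under iterated application of $\phi$: if $d(p_t,\phi(p_t)) > \ell$ at some interior $t$, then examining the long broken paths $p_t, \phi(p_t), \phi^2(p_t), \ldots$ and using the first variation of arclength together with the compactness of the unit sphere bundle of $N$ produces a contradiction. Pushing the resulting path in $\min(\phi)$ forward through the descent map above yields a continuous path in $C_{[\sigma]}$ joining the initial vectors of $c_0$ and $c_1$.
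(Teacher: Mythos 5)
Your compactness argument is fine and essentially self-contained: Lemma \ref{closed geodesics have the same length} forces the common length $\ell$, so $C_{[\sigma]}$ lies in the compact sphere bundle of radius $\ell$ over the compact manifold $N$, and the two closing conditions together with the closedness of the component $N_{[\sigma]}$ in $TN$ pass to limits. (For the record, the paper gives no proof of this lemma; it quotes it from Croke--Schroeder, so the comparison below is with their argument rather than with anything internal to the paper.)

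The connectedness half has a genuine gap, and you have flagged it yourself without repairing it. The assertion $d(p_t,\phi(p_t)) \equiv \ell$ along the geodesic from $p_0$ to $p_1$ says that the displacement function of $\phi$ is constant on segments joining its minima, i.e.\ that $\min(\phi)$ is convex. That is a flat-strip--type statement whose known proofs use convexity of distance functions (nonpositive curvature or no focal points, via Lemma \ref{jacobi fields}); under no conjugate points alone it is exactly the kind of conclusion that fails, as the paper notes when it remarks that the flat torus theorem breaks down there. The contradiction you sketch does not materialize: a point $p_t$ with $d(p_t,\phi(p_t)) = \ell + \varepsilon$ only makes the broken path $p_t, \phi(p_t), \ldots, \phi^n(p_t)$ non-minimizing, since its length is $n(\ell+\varepsilon)$ while $d(p_t,\phi^n(p_t)) \leq n\ell + 2\,d(p_0,p_t)$; but displacement strictly greater than $\ell$ is the normal state of affairs for every point off $\min(\phi)$, and nothing in your argument exploits the fact that $p_t$ lies on the segment between two minimum points, so no contradiction is available. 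You are also reducing to a strictly stronger statement than needed: $\min(\phi)$ is the full preimage of $C_{[\sigma]}$ under the covering $\hat{N} \to N_{[\sigma]}$, and preimages of connected sets under coverings need not be connected. The Croke--Schroeder proof works downstairs instead: take a free homotopy $h_u$ from $c_0$ to $c_1$ through loops in the class, replace it by a continuous family of broken geodesics with a fixed number of vertices, and apply the Birkhoff curve-shortening deformation to the entire family. Lemma \ref{closed geodesics have the same length} is the key input: every critical point of length in the class sits at the bottom level $\ell$, so the shortened family is eventually confined to an arbitrarily small neighborhood of $C_{[\sigma]}$ in the compact, finite-dimensional space of broken geodesics, and a connected path confined to disjoint neighborhoods of two purported clopen pieces of $C_{[\sigma]}$ while joining one piece to the other is impossible. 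Your universal-cover reduction should be replaced by an argument of this kind.
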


\noindent When $N$ has no focal points and $\Sigma$ generates a maximal Abelian subgroup of $\pi_1(N)$, the flat torus theorem implies much more about $C_\Sigma$.

\begin{lemma}\label{splitting of spaces of closed geodesics}
    Let $N$ be a complete Riemannian manifold with no focal points and $G$ an Abelian subgroup of $\pi_1(N)$ of rank $m$. Suppose that $G$ is generated by $[\sigma_1],\ldots,[\sigma_k]$, where $k \geq m$, and that $G = Z(G)$. Write $\Sigma = ([\sigma_1],\ldots,[\sigma_k])$. Then the following hold:

    \vspace{2pt}

    \noindent \textbf{(a)} $C_\Sigma$ is isometric to $C \times \tor^m$ for a closed and strongly convex subset $C$ of $N_\Sigma$;

    \vspace{2pt}

    \noindent \textbf{(b)} Each $\tor^m$-fiber of $C_\Sigma \cong C \times \tor^m$ is a totally geodesic, flat, and embedded submanifold of $N_\Sigma$ whose fundamental group has image under $(\pi_\Sigma)_*$ path-conjugate to $G$;

    \vspace{2pt}

    \noindent \textbf{(c)} If $N$ is compact, then $C$ is nonempty and compact.

\end{lemma}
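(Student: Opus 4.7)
The strategy is to push the flat torus theorem (Theorem \ref{flat torus theorem}) through the covering $\hat{N} \to N_\Sigma$, exploiting that the hypothesis $G = Z(G)$ together with Theorem \ref{structure theorem for geodesic loops}(c) identifies $N_\Sigma$ canonically with the intermediate cover $\hat{N}/G$.

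First I would identify $C_\Sigma$ with $\min(G)/G$. A tensor $v_1 \otimes \cdots \otimes v_k$ lies in $C_\Sigma$ precisely when each $\gamma_{v_i}$ is a smooth closed geodesic. Lifting to $\hat{N}$ from a lift $\hat{p}$ of the footpoint, the geodesic with initial vector $\hat{v}_i$ terminates at $\phi_{\sigma_i}(\hat{p})$, and smoothness of $\gamma_{v_i}$ at the closing point forces that geodesic to extend to an axis of $\phi_{\sigma_i}$. Hence $\hat{p} \in \bigcap_i \min(\phi_{\sigma_i}) = \min(G)$, and conversely each $\hat{p} \in \min(G)$ determines a unique tensor in $C_\Sigma$. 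Under the identification $N_\Sigma = \hat{N}/G$, this yields $C_\Sigma \cong \min(G)/G$.

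Next I apply Theorem \ref{flat torus theorem}(a)--(b): $\min(G) \cong D \times \real^m$ with $D$ closed and strongly convex in $\hat{N}$, and $G$ acts as a cocompact translation lattice on each $\real^m$-fiber while fixing $D$ pointwise, the latter being the standard Euclidean de~Rham feature of an abelian group of isometries with a flat invariant factor. Quotienting gives $C_\Sigma \cong D \times \tor^m$ with $\tor^m = \real^m/G$. Setting $C$ to be the image of $D$ in $N_\Sigma$ proves (a), since the restriction of the local isometry $\hat{N} \to N_\Sigma$ to $D$ is an isometric embedding and transfers closedness and strong convexity from $D \subseteq \hat{N}$ to $C \subseteq N_\Sigma$. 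For (b), each $\real^m$-fiber of $\min(G)$ is totally geodesic and flat in $\hat{N}$ and is preserved setwise by $G$, so it descends to a totally geodesic, flat, embedded $\tor^m$ in $N_\Sigma$; composing with $\pi_\Sigma$ and invoking Theorem \ref{flat torus theorem}(c) yields the path-conjugacy statement about fundamental groups.

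For (c), Theorem \ref{flat torus theorem}(d) immediately gives $D$, and hence $C$, nonempty when $N$ is compact, so it remains to show $C_\Sigma \cong C \times \tor^m$ is compact. By Lemma \ref{closed geodesics have the same length}, every closed geodesic in $[\sigma_i]$ has a common length $\ell_i$, so $C_\Sigma$ lies inside the set $A = \{ v_1 \otimes \cdots \otimes v_k \in T^k N : \|v_i\| = \ell_i \text{ for each } i \}$, which is compact when $N$ is compact since the fixed-radius sphere bundles over $N$ are compact. Since $N_k$ is closed in $T^k N$ and $N_\Sigma$ is a connected component of the locally connected manifold $N_k$, $N_\Sigma$ is closed in $T^k N$; thus $C_\Sigma = N_\Sigma \cap A$ is compact, and $C$ is too. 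The main technical obstacle is verifying that the $G$-action fixes the $D$-factor pointwise, which is what makes $C$ an isometric copy of $D$ in $N_\Sigma$ rather than a proper quotient; this requires extracting from the proof of Theorem \ref{flat torus theorem} the Euclidean de~Rham splitting of the $G$-action on $\min(G)$.
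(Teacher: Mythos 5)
Your proposal is correct and follows essentially the same route as the paper: identify $N_\Sigma$ with $\hat{N}/G$ via Theorem \ref{structure theorem for geodesic loops}(c) and the hypothesis $G = Z(G)$, observe that $\psi_\Sigma^{-1}(C_\Sigma) = \min(G)$, apply Theorem \ref{flat torus theorem}(a)--(b) to split $\min(G) \cong D \times \real^m$ with $G$ translating the $\real^m$-fibers, and take $C$ to be the image of $D$. Note that the ``main technical obstacle'' you flag at the end --- that $G$ fixes the $D$-factor, so that $C$ is a copy of $D$ rather than a quotient --- does not require revisiting the proof of the flat torus theorem: it is exactly the content of the statement of Theorem \ref{flat torus theorem}(b) (translation on each $\real^m$-fiber preserves the fibers, hence fixes the $D$-coordinate), which is all the paper uses. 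The only genuine divergence is in part (c): the paper deduces compactness of $C$ from Lemma \ref{croke--schroeder} (Croke--Schroeder) together with Theorem \ref{flat torus theorem}(d), whereas you argue directly that $C_\Sigma$ is a closed subset of the compact set $\{ \|v_i\| = \ell_i \}$ using Busemann's Lemma \ref{closed geodesics have the same length}; this is a slightly more self-contained argument, though you should spell out the reverse inclusion $N_\Sigma \cap A \subseteq C_\Sigma$ (a geodesic loop of minimal length in its free homotopy class has no corner, hence is a closed geodesic), or simply note that $C_\Sigma$ is closed because periodicity of the time-one geodesic flow is a closed condition.
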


\begin{proof}
    By Theorem \ref{flat torus theorem}(a), $\min(G)$ is isomorphic to $D \times \real^m$ for a closed and strongly convex subset $D$ of $\min(G)$. Let $C$ be the image of $D$ under the covering map $\psi_\Sigma : \hat{N} \to N_\Sigma$, and note that $\min(G) = \psi_\Sigma^{-1}(C_\Sigma)$. By Theorem \ref{structure theorem for geodesic loops}(c), $G$ is naturally identified with the deck transformation group of $\psi_\Sigma$. Since $G$ has rank $m$ and acts on $\min(G)$ by translation in the $\real^m$-factors, $C_\Sigma$ is isomorphic to $D \times (\real^m/G) \cong D \times \tor^m$, and part (b) follows. It is clear that $C$ is closed and, since $\min(G)$ is strongly convex and $\psi_\Sigma$ is injective on $D$, strongly convex. This proves (a). Part (c) follows immediately from Theorem \ref{flat torus theorem}(d) and Lemma \ref{croke--schroeder}.
\end{proof}

\begin{remark}\label{loop map is totally geodesic}
    Suppose the assumptions of Lemma \ref{splitting of spaces of closed geodesics} hold. Then a map $f : \tor^k \to N$ such that $f \circ s_i \in [\sigma_i]$ for each $1 \leq i \leq k$ is totally geodesic if and only if it is of the form $f = \Upsilon_{v_1 \otimes \cdots \otimes v_k}$ for some $v_1 \otimes \cdots \otimes v_k \in C_\Sigma$, in which case each $v_i$ is the unique vector such that $\gamma_{v_i} \in [\sigma_i]$.
\end{remark}

\subsection{Center of mass}

The Riemannian center of mass will be used to average maps in the universal cover of a manifold with no focal points. Elementary results about the center of mass are widely known, although they're developed somewhat differently here than in \cite{GroveKarcher1973}, the source of its modern renaissance.

Note that, for a fixed $q \in N$ and all $x \in B(q,\rho(q)/2)$, the function $d^2(\cdot,x)$ is strictly convex on $B(q,\rho(q)/2)$. The following may be proved using the first variation formula.

\begin{lemma}\label{distance strictly decreasing}
    Let $N$ be a complete Riemannian manifold, $q \in N$, and $Y \subseteq B(q,\rho(q)/2)$ a closed and convex set. For any $x \in B(q,\rho(q)/2) \setminus Y$, let $\gamma : [a,b] \to B(q,\rho(q)/2)$ be a geodesic from $x$ to $Y$ with length equal to $d(x,Y)$. Then, for each $y \in Y$, the function $t \mapsto d(\gamma(t),y)$ is strictly decreasing.
\end{lemma}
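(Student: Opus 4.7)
The plan is to establish two facts about the real-valued function $f(t) := d(\gamma(t), y)$: that it is $C^2$ and strictly convex on $[a, b]$, and that $f'(b) \leq 0$. Together these give $f'(t) < f'(b) \leq 0$ throughout $[a, b)$ by strict monotonicity of $f'$, so $f$ is strictly decreasing on $[a, b]$. The main technical step is the endpoint derivative, which will come from the first variation formula combined with the convexity of $Y$.

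A preliminary case must be disposed of: if $y \in \gamma([a, b])$, then minimality of $\gamma$ as a path from $x$ to $Y$ forces $y = \gamma(b)$, and inside the strongly convex ball $B(q, \rho(q)/2)$ the unique minimal geodesic from $\gamma(t)$ to $\gamma(b)$ is $\gamma|_{[t, b]}$, so $f(t) = (b - t)\|\gamma'\|$ is visibly strictly decreasing. Assume henceforth $y \notin \gamma([a, b])$. The triangle inequality together with the bound $r(y) \geq \rho(q)$ gives $B(q, \rho(q)/2) \subseteq B(y, r(y))$, so the remark recalled in subsection 3.1 makes $d(\cdot, y)$ strictly convex and $C^2$ on $B(q, \rho(q)/2) \setminus \{y\}$. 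Composing with the nonconstant geodesic $\gamma$, which stays in that set by assumption, yields $f \in C^2([a, b])$ with $f'' > 0$.

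To obtain $f'(b) \leq 0$, let $\sigma : [0, \ell] \to B(q, \rho(q)/2)$ be the unit-speed minimal geodesic from $\gamma(b)$ to $y$, which lies in $Y$ by convexity. Because $\gamma(b)$ minimizes the distance from $x$ to $Y$, the function $s \mapsto d(x, \sigma(s))$ attains a minimum at $s = 0$; the first variation of arclength, applied to the family of minimal geodesics from $x$ to $\sigma(s)$, identifies the one-sided derivative at $s = 0$ with $\|\gamma'\|^{-1}\langle \gamma'(b), \sigma'(0)\rangle$, so this inner product is nonnegative. Using the standard identity $\nabla_z d(z, y) = -\sigma_z'(0)$, where $\sigma_z$ is the unit-speed geodesic from $z$ to $y$, then gives $f'(b) = -\langle \gamma'(b), \sigma'(0)\rangle \leq 0$, completing the argument.
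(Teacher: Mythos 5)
Your argument fills in exactly the hint the paper gives---the text offers no proof beyond the remark that the lemma ``may be proved using the first variation formula''---and the two pillars are right: the endpoint inequality $f'(b)\leq 0$ via the first variation of $s\mapsto d(x,\sigma(s))$ along a geodesic $\sigma$ that stays in $Y$ by convexity, and convexity of $d(\cdot,y)$ on $B(y,r(y))\supseteq B(q,\rho(q)/2)$ to propagate that inequality backward along $\gamma$. The preliminary reduction to $y=\gamma(b)$ when $y\in\gamma([a,b])$ is also correct.

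One step is stated more strongly than is true. The claim that $d(\cdot,y)$ is strictly convex on $B(y,r(y))\setminus\{y\}$, hence $f''>0$, fails along geodesics radial from $y$: if $y$ lies on the prolongation of $\gamma$ beyond $\gamma(b)$ (perfectly possible---take $Y$ a segment through the foot point collinear with $\gamma$), then $f(t)=d(\gamma(t),y)$ is affine and $f''\equiv 0$, so the chain $f'(t)<f'(b)\leq 0$ collapses to $f'(t)=f'(b)\leq 0$, which only yields ``nonincreasing.'' This is an imprecision inherited from the paper itself; only $d^2(\cdot,y)$ is strictly convex in all directions. The gap is harmless and easy to close: in the radial configuration $\sigma'(0)$ is parallel to $\gamma'(b)$, so your own endpoint computation gives $f'(b)=-\|\gamma'(b)\|<0$ and the affine $f$ is still strictly decreasing. (Note that $y$ cannot lie on the backward prolongation through $x$: the minimal geodesic in $Y$ from $y$ to $\gamma(b)$ would then pass through $x\notin Y$.) Alternatively, one can argue that $f$ is convex with $f'(b)\leq 0$, hence nonincreasing, and that $f$ cannot be constant on any subinterval because $d^2(\cdot,y)$ is strictly convex there; either patch makes the proof complete.
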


\noindent A \textbf{mass distribution} is a measurable function $m : Z \to N$, where $(Z,\mu)$ is a measure space of total measure one. Whenever $m$ maps into a ball $B(q,R)$ of radius $R < \rho(q)/2$, the function $x \mapsto \int_Z d^2(x,m(z))\,d\mu$ is strictly convex on the closed ball $\overline{B(q,R)}$ and, consequently, attains a unique minimum $\Phi_m$ therein. The point $\Phi_m$ is the \textbf{center of mass} of $m$. By Lemma \ref{distance strictly decreasing}, $\Phi_m$ lies inside the convex closure of $m(Z)$ and is the unique minimum of $x \mapsto \int_Z d^2(x,m(z))\,d\mu$ within $B(q,\rho(q)/2)$. If $R < \rho(q)/6$, the triangle inequality implies that $\Phi_m$ is the unique minimum on $N$.

\begin{lemma}\label{center of mass commutes with isometries}
    Let $N$ be a complete Riemannian manifold, $q \in N$, and $m : Z \to N$ a mass distribution. Suppose that $m(Z) \subseteq B(q,R)$ for some $R < \rho(q)/2$. Then, for any isometry $\alpha$ of $N$, $\alpha(\Phi_m) = \Phi_{\alpha \circ m}$.
\end{lemma}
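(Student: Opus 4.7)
The plan is to exploit the fact that an isometry $\alpha$ preserves distances, so it turns the functional whose unique minimum defines $\Phi_m$ into the one whose minimum defines $\Phi_{\alpha \circ m}$. Concretely, set $F(x) = \int_Z d^2(x,m(z))\,d\mu$ and $G(x) = \int_Z d^2(x,\alpha(m(z)))\,d\mu$. Since $\alpha$ is an isometry,
\[
d\bigl(x,\alpha(m(z))\bigr) = d\bigl(\alpha^{-1}(x),m(z)\bigr),
\]
so $G = F \circ \alpha^{-1}$.

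First I would verify that $\Phi_{\alpha \circ m}$ is actually defined, i.e.\ that the hypotheses of the construction preceding the lemma apply to $\alpha \circ m$. Because isometries preserve the strong convexity radius, $\rho(\alpha(q)) = \rho(q)$; because $m(Z) \subseteq B(q,R)$ and $\alpha$ is an isometry, $(\alpha \circ m)(Z) \subseteq B(\alpha(q),R)$, and this radius still satisfies $R < \rho(\alpha(q))/2$. Thus $G$ is strictly convex on $\overline{B(\alpha(q),R)}$ and attains a unique minimum there, which is $\Phi_{\alpha \circ m}$.

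Next, note that $\alpha^{-1}$ maps $\overline{B(\alpha(q),R)}$ bijectively onto $\overline{B(q,R)}$. Since $F$ has its unique minimum on $\overline{B(q,R)}$ at $\Phi_m$, the composition $F \circ \alpha^{-1}$ has its unique minimum on $\overline{B(\alpha(q),R)}$ at the preimage $\alpha(\Phi_m)$. Combining with $G = F \circ \alpha^{-1}$ and the uniqueness of the center of mass, we conclude
\[
\Phi_{\alpha \circ m} = \alpha(\Phi_m).
\]

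There is no real obstacle here; the argument is essentially a tautology once one writes down the defining variational problem and observes that $\alpha$ carries it to the analogous problem for $\alpha \circ m$. The only point to take care with is verifying that the ball in which $\Phi_{\alpha \circ m}$ is defined is compatible with the strong convexity radius, which follows immediately from the isometric invariance of $\rho$.
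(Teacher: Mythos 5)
Your proof is correct: the identity $G = F\circ\alpha^{-1}$ together with the isometry-invariance of $\rho$ and of metric balls is exactly the elementary argument the paper has in mind (it states this lemma without proof as a standard fact about the center of mass). You also correctly handle the one point worth checking, namely that $\Phi_{\alpha\circ m}$ is defined relative to the ball $B(\alpha(q),R)$ with $R<\rho(\alpha(q))/2$.
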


\begin{lemma}\label{center of mass in a product space}
    Let $N$ be a complete Riemannian manifold, $q \in N$, and $m : Z \to N$ a mass distribution. Suppose that $m(Z) \subseteq S$ for some closed and convex subset of $B(q,R)$, where $R < \rho(q)/2$. If $S$ is isometric to $S_1 \times \cdots \times S_k$, where the $S_i$ are convex subsets of $S$, then $\Phi_m = (\Phi_{\pi_1 \circ m},\ldots,\Phi_{\pi_k \circ m})$, where $\pi_i$ is projection onto $S_i$.
\end{lemma}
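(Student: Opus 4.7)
My plan is to exploit the fact that squared distance in a Riemannian product decomposes additively across factors, and then to argue that minimizing the resulting sum reduces to minimizing each summand independently. Under the given isometry $S \cong S_1 \times \cdots \times S_k$, each $y \in S$ corresponds to the tuple $(\pi_1(y),\ldots,\pi_k(y))$, and the product metric yields
\[
    d^2(x,y) = \sum_{i=1}^k d^2(\pi_i(x),\pi_i(y))
\]
for all $x,y \in S$. Recall from the paragraph preceding the lemma that $\Phi_m$ is the unique minimizer of $x \mapsto \int_Z d^2(x,m(z))\,d\mu$ on $\overline{B(q,R)}$ and that it lies in the convex closure of $m(Z)$. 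Since $S$ is closed and convex and contains $m(Z)$, one has $\Phi_m \in S$, so the minimization can be performed over $S$.

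Next I would integrate the pointwise decomposition against $\mu$ to obtain
\[
    \int_Z d^2(x,m(z))\,d\mu = \sum_{i=1}^k \int_Z d^2(\pi_i(x), \pi_i(m(z)))\,d\mu\textrm{.}
\]
Because $S$ is literally the Cartesian product $S_1 \times \cdots \times S_k$, the coordinates $\pi_i(x)$ may be varied independently, and the $i$-th summand depends only on $\pi_i(x)$. Consequently, the unique minimum of the sum over $S$ is attained at the tuple whose $i$-th entry minimizes the $i$-th summand over $S_i$. That $i$-th minimizer is precisely $\Phi_{\pi_i \circ m}$, so $\Phi_m = (\Phi_{\pi_1 \circ m},\ldots,\Phi_{\pi_k \circ m})$.

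The main obstacle is verifying that the center-of-mass construction applies to each factor distribution $\pi_i \circ m$. Each $S_i$ sits inside $S \subseteq B(q,R)$ with $R < \rho(q)/2$, and because it appears as a factor of a Riemannian product, it is totally geodesic inside $S$; combined with the convexity of $S$ in $N$, this provides each $S_i$ with the convexity and distance hypotheses needed to define $\Phi_{\pi_i \circ m}$ within $B(q,R)$. Strict convexity of $d^2(\cdot,y)$ then yields uniqueness of each factor minimizer, and strict convexity of the sum yields uniqueness of the joint minimizer, which closes the argument.
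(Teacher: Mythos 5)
The paper states this lemma without proof, listing it among the ``widely known'' elementary facts about the center of mass, so there is no written argument to compare against; your proposal supplies the standard one and it is correct. The splitting $d^2(x,y)=\sum_i d^2(\pi_i(x),\pi_i(y))$ with ambient distances is legitimate because $S$ and the $S_i$ are convex inside a strongly convex ball (so intrinsic and ambient distances agree along the unique minimal geodesics), and Lemma \ref{distance strictly decreasing} places each $\Phi_{\pi_i \circ m}$ in the convex closure of $\pi_i(m(Z)) \subseteq S_i$ (with $S_i$ closed since $S$ is compact by Hopf--Rinow), so factor-wise minimization over $S_1 \times \cdots \times S_k$ yields exactly the tuple $(\Phi_{\pi_1 \circ m},\ldots,\Phi_{\pi_k \circ m})$, which coincides with $\Phi_m$ by uniqueness of the minimizer.
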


\noindent The center of mass will be used when $Z$ is a finite set. If $\Lambda = (\lambda_1,\ldots,\lambda_k) \in [0,1]^k$ satisfies $\sum_i \lambda_i = 1$ and $y = (y_1,\ldots,y_k) \in S^k$, where $S$ is a subset of $B(q,\rho(q)/2)$ for some $q \in N$, then $\Phi_\Lambda(y_1,\ldots,y_k)$ will denote $\Phi_m$, where $m : \{ 1,\ldots,k \} \to N$ is the mass distribution satisfying $m(i) = y_i$ and $\mu(i) = \lambda_i$.

\begin{remark}\label{grove--karcher}
    In \cite{GroveKarcher1973}, $\Phi_m$ is defined as the unique minimum of $x \mapsto \int_Z \exp_x^{-1}(m(z))\,d\mu$ within a sufficiently small ball $B(q,R)$. Whenever $R < \rho(q)/2$, this is equivalent to the definition given here.
\end{remark}

\subsection{Commutative diagrams}

Suppose $M_0$ and $M_1$ are connected $C^1$ manifolds, $M_0$, $M_0 \times \real^k$, and $M_1 \times \tor^k$ have Riemannian metrics, where $M_0 \times \real^k$ has a product metric with a flat $\real^k$-factor, $\pi_0 : M_0 \times \real^k \to \real^k$ and $\pi_1 : M_1 \times \tor^k \to \tor^k$ are the projections onto the second components, and $\psi : M_0 \times \real^k \to M_1 \times \tor^k$ and $\phi : \real^k \to \tor^k$ are covering maps. The \textbf{NNRC diagram}
\begin{equation}\label{diagram1}
    \begin{gathered}
        \xymatrix{ M_0 \times \real^k \ar[r]^-{\pi_0} \ar[d]^-\psi & \real^k \ar[d]^-\phi \\ M_1 \times \tor^k \ar[r]^-{\pi_1} & \tor^k}
    \end{gathered}
\end{equation}
\textbf{commutes isometrically} if it commutes and $\psi$ and $\phi$ are local isometries. These diagrams were introduced in \cite{Dibble2018a} and are motivated by the Cheeger--Gromoll splitting theorem \cite{CheegerGromoll1972,CheegerGromoll1971}, a consequence of which is that every compact manifold with nonnegative Ricci curvature is finitely covered by a diffeomorphic product in a NNRC diagram that commutes isometrically.

\begin{theorem}[(Cheeger--Gromoll)]\label{cheeger--gromoll}
    If $M$ is compact and has nonnegative Ricci curvature, then it is finitely and locally isometrically covered by a manifold $M_1 \times \tor^k$ in a diagram of the form \eqref{diagram1} that commutes isometrically, in which $M_0$ is compact and simply connected.
\end{theorem}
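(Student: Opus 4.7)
The plan is to apply the Cheeger--Gromoll splitting theorem iteratively to the Riemannian universal cover $\hat M$ of $M$, producing a maximal isometric splitting $\hat M \cong M_0 \times \real^k$ in which $M_0$ is simply connected (as $\hat M$ and $\real^k$ are) and contains no line. Because $M_0$ contains no line, a de Rham--type argument forces every isometry of $\hat M$ to preserve this decomposition, so $\mathop{Isom}(\hat M) \cong \mathop{Isom}(M_0) \times \mathop{Isom}(\real^k)$; the deck group $\Gamma = \pi_1(M)$ embeds into this product.

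Next I would establish that $M_0$ is compact. The projection $\rho \colon \Gamma \to \mathop{Isom}(\real^k)$ has discrete image, since the other factor $\mathop{Isom}(M_0)$ is compact while $\Gamma$ is discrete in the full product, and $\rho(\Gamma)$ acts cocompactly on $\real^k$ because the second-coordinate projection $\hat M \to \real^k$ is $\Gamma$-equivariant and $M$ is compact. Bieberbach's theorem then produces a finite-index subgroup $\Gamma' \leq \Gamma$ with $\rho(\Gamma') = \Lambda$ a translation lattice isomorphic to $\mathbb{Z}^k$; the kernel $L = \ker(\rho|_{\Gamma'})$ acts freely and properly discontinuously on $M_0$. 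Since $\hat M/\Gamma'$ is a compact fibre bundle over $\real^k/\Lambda \cong \tor^k$ with fibre $M_0/L$, the fibre is compact; were $L$ infinite, the compact $M_0/L$ would have infinite fundamental group and nonnegative Ricci curvature, and the standard line construction on its universal cover would produce a line in $M_0$, contradicting maximality of the splitting. Hence $L$ is finite and $M_0$, a finite cover of the compact $M_0/L$, is itself compact.

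With $M_0$ compact, Bochner's theorem for Killing vector fields on closed Ricci--nonnegative manifolds gives that every Killing vector field on $M_0$ is parallel; a nontrivial parallel vector field would produce a line in $M_0$ via de Rham, contradicting the no-line condition. Hence $M_0$ admits no nontrivial Killing vector field, so $\mathop{Isom}(M_0)$ has trivial identity component and, being compact, is finite. The projection $p_1 \colon \Gamma \to \mathop{Isom}(M_0)$ therefore has finite image, so its kernel $\Gamma_1 \leq \Gamma$ is a finite-index subgroup lying entirely inside $\{e\} \times \mathop{Isom}(\real^k)$. Applying Bieberbach once more inside $\Gamma_1$ yields a finite-index subgroup $\Lambda'$ that is a pure translation lattice of rank $k$ in $\real^k$; the corresponding quotient $\hat M/\Lambda' \cong M_0 \times (\real^k/\Lambda') = M_0 \times \tor^k$ is a finite Riemannian cover of $M$ (so one may take $M_1 = M_0$), and diagram \eqref{diagram1} commutes isometrically with $\psi = \mathop{id}_{M_0} \times \phi$ the obvious product covering.

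The principal technical obstacle is the compactness of $M_0$: the Bieberbach step is routine, but ruling out an infinite $L$ requires the line construction on a compact manifold of nonnegative Ricci curvature with infinite fundamental group, which is itself part of the Cheeger--Gromoll structure theory. Once compactness of $M_0$ is secured, Bochner's theorem makes $\mathop{Isom}(M_0)$ finite and collapses the remaining analysis, so no delicate cohomological bookkeeping about nonsplit extensions is needed to produce the genuine product structure on the finite cover.
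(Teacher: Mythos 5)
The paper states this result without proof, citing Cheeger--Gromoll, so your attempt must be judged on its own merits. Most of it is the standard argument, but there is a genuine circularity at the crux you yourself identify, the compactness of $M_0$. You deduce that $\rho(\Gamma) \subseteq \mathop{Isom}(\real^k)$ is discrete ``since the other factor $\mathop{Isom}(M_0)$ is compact,'' but compactness of $\mathop{Isom}(M_0)$ is a consequence (via Myers--Steenrod) of the compactness of $M_0$ that you are in the middle of proving. Without it, the claim fails: a discrete subgroup of a product of two noncompact groups can project densely into either factor (e.g.\ a cocompact lattice in $\mathop{Isom}(\mathbb{H}^2)\times\mathop{Isom}(\real)$ built from a homomorphism of a surface group onto a dense subgroup of $\real$). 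So the first Bieberbach application, and hence the subgroup $\Gamma'$, the kernel $L$, and the fibration over $\tor^k$, are not yet available when you invoke them.

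The standard repair bypasses Bieberbach entirely at this stage. Since $M$ is compact, the orbit of a basepoint $(x_0,0)$ under $\Gamma$ is $D$-dense in $\hat M$ for some $D$, so the orbit of $x_0$ under the $M_0$-components $a$ of elements $(a,b)\in\Gamma$ is $D$-dense in $M_0$. If $M_0$ were noncompact it would contain a ray $\gamma$ from $x_0$; choosing $a_n$ with $d(a_n x_0,\gamma(n))\le D$ and passing to a subsequential limit of the minimizing segments $a_n^{-1}\circ\gamma|_{[0,2n]}$, whose midpoints stay in $\overline{B(x_0,D)}$, produces a line in $M_0$, contradicting the no-lines property of the splitting. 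This is the argument in Cheeger--Gromoll, and it is the ``line construction'' you allude to, applied directly rather than through the quotient $M_0/L$. Once $M_0$ is compact, the second half of your proof --- Bochner forcing Killing fields to be parallel, hence $\mathop{Isom}(M_0)$ finite, hence $\Gamma_1=\ker(p_1|_\Gamma)$ of finite index and sitting as a discrete cocompact subgroup of $\mathop{Isom}(\real^k)$, followed by Bieberbach --- is correct and yields the cover $M_0\times\tor^k$ with the isometrically commuting diagram; your first Bieberbach detour is then not only circular but unnecessary.
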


\noindent The following collects properties of NNRC diagrams developed in \cite{Dibble2018a}.

\begin{lemma}\label{nnrc diagrams}
    Suppose the diagram \eqref{diagram1} commutes isometrically. Then the following hold:

    \vspace{2pt}

    \noindent \textbf{(a)} If $M_1 \times \tor^k$ has finite volume, every geodesic in the Riemannian universal covering space $\hat{N}$ is minimal, and $f : M_1 \times \tor^k \to N$ is continuous and acts trivially on $\pi_1(M_1)$, then $f$ is totally geodesic if and only if $f$ is constant along each $M_1$-fiber and, with respect to the flat metric on $\tor^k$, totally geodesic along each $\tor^k$-fiber.

    \vspace{2pt}

    \noindent \textbf{(b)} If $\Gamma$ is the deck transformation group of $\psi$ and $\mathscr{I}(M_0)$ and $\mathscr{I}(\real^k)$ are the isometry groups of $M_0$ and $\real^k$, respectively, then $\Gamma \subseteq \mathscr{I}(M_0) \times \mathscr{I}(\real^k)$.
\end{lemma}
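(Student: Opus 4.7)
For part (b), the plan is to show directly that each deck transformation of $\psi$ factors as a product. Any $\gamma \in \Gamma$ is necessarily an isometry of $M_0 \times \real^k$ since $\psi$ is a Riemannian covering. Commutativity of \eqref{diagram1} gives $\phi \circ \pi_0 \circ \gamma = \phi \circ \pi_0$, so the continuous function $(x,y) \mapsto \pi_0(\gamma(x,y)) - y$ is valued in the discrete deck group of $\phi$ and hence a constant vector $v_\gamma$ in that lattice. At the infinitesimal level, $d\gamma$ preserves the $T\real^k$-component of every tangent vector; combined with its being a linear isometry of the orthogonal splitting $TM_0 \oplus T\real^k$, this forces $d\gamma$ to be block-diagonal with the identity on the $\real^k$-block. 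Integrating, the $M_0$-component of $\gamma(x,y)$ is independent of $y$, so $\gamma(x,y) = (\alpha(x), y + v_\gamma)$ for some $\alpha \in \mathscr{I}(M_0)$.

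For part (a), I would first invoke (b) to deduce that the metric on $M_1 \times \tor^k$ is locally a Riemannian product, that each $M_1$-fiber and each $\tor^k$-fiber is a totally geodesic submanifold, and that each $\tor^k$-fiber carries the flat metric pushed down from $\real^k$ via $\phi$. The ``if'' direction then follows easily: if $f = g \circ \pi_1$ with $g : (\tor^k, \mathrm{flat}) \to N$ totally geodesic, then any geodesic $\gamma$ in $M_1 \times \tor^k$ lifts to a product geodesic $(\tilde\gamma_1, \tilde\gamma_2)$ in $M_0 \times \real^k$, and $f \circ \gamma = g \circ (\phi \circ \tilde\gamma_2)$ is a geodesic in $N$. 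Likewise, one of the two conclusions of the ``only if'' direction --- that the restriction of $f$ to each $\tor^k$-fiber is totally geodesic with the flat metric --- drops out immediately from restricting a totally geodesic map to totally geodesic submanifolds.

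The main obstacle is the remaining ``only if'' conclusion: that a totally geodesic $f$ is constant along each $M_1$-fiber. Restricting $f$ to $M_1 \times \{\theta\}$ yields a totally geodesic $f_\theta : M_1 \to N$, and triviality of $f_*$ on $\pi_1(M_1)$ lets $f_\theta$ lift to a totally geodesic $\tilde f_\theta : M_1 \to \hat N$. I would argue that $\tilde f_\theta$ is constant by composition with convex distance functions in $\hat N$: for each $\hat q \in \hat N$, since every geodesic in $\hat N$ is minimal, $d(\cdot, \hat q)$ is convex, and precomposition with the totally geodesic $\tilde f_\theta$ yields a convex function on $M_1$. Finite volume of $M_1 \times \tor^k$ transfers to $M_1$ via the local product structure from (b), and a convex function on a finite-volume Riemannian manifold without boundary is forced to be constant by a divergence-theorem integration, which I expect to carry out using the integral techniques of \cite{Dibble2018a}. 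Letting $\hat q$ vary, $\tilde f_\theta$ is itself constant. The chief difficulty will be upgrading the convexity of $d(\cdot, \hat q)$ under the stated hypothesis and carefully executing the divergence argument on the possibly non-compact $M_1$.
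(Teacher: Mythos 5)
Your proof of part (b) is correct and essentially complete: commutativity of \eqref{diagram1} pins down the $\real^k$-component of each deck transformation as a constant lattice translation, and the orthogonality argument then forces the block-diagonal form of the differential. The ``if'' direction of part (a) is also sound as you actually execute it---lift a geodesic through the local isometry $\psi$, observe that $\pi_1\circ\gamma = \phi\circ\tilde\gamma_2$ is a constant-speed geodesic of the flat torus, and compose with $g$---and, importantly, that computation does not rely on your preliminary claims about the fibers. Those claims are false: the $\tor^k$-fibers of $M_1\times\tor^k$ need not be totally geodesic submanifolds, nor need they carry the flat metric. For instance, take $M_0\times\real = S^2\times\real$ with the product metric and quotient by the group generated by $(x,y)\mapsto(\alpha(x),y+1)$ for a rotation $\alpha$; identifying the mapping torus with $S^2\times S^1$ compatibly with \eqref{diagram1}, the fiber $\{p\}\times S^1$ lifts to the helix $y\mapsto(R_y(p),y)$, which is not a geodesic for generic $p$. (Only the $M_1$-fibers, being $\pi_1$-level sets and hence $\psi$-images of $M_0$-fibers, are automatically totally geodesic.) Consequently the second half of the ``only if'' conclusion does not ``drop out by restriction''; it must instead be derived \emph{after} constancy on the $M_1$-fibers is known, at which point $f = g\circ\pi_1$ and the same lifting computation as in the ``if'' direction delivers it. This is a repairable ordering issue.

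The serious gap is the step ``since every geodesic in $\hat N$ is minimal, $d(\cdot,\hat q)$ is convex.'' That implication is false. Minimality of all geodesics in $\hat N$ is exactly the no-conjugate-points condition ($\inj(\hat N)=\infty$), whereas convexity of $d(\cdot,\hat q)$ for every $\hat q$ characterizes no focal points, a strictly stronger condition; Gulliver's examples, cited in subsection 3.2, separate the two. So the convexity route cannot prove that $\tilde f_\theta$ is constant under the stated hypothesis---this is not a technicality to be ``upgraded'' but a dead end, and it is precisely the distinction this paper is built around. An argument using only minimality of geodesics in $\hat N$ is required: for example, a totally geodesic $\tilde f_\theta$ has parallel differential, hence constant energy density, and if that constant were positive some geodesic of the fiber would be carried to a minimal geodesic of $\hat N$ traversed at uniform positive speed; one must then play this off against the finite volume of the fiber (which is where the integral techniques of \cite{Dibble2018a} genuinely enter, rather than in a divergence-theorem step applied to a convex function). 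Note also that even your intended endgame---a convex function on a finite-volume manifold is constant---requires completeness of the fiber, which you have not addressed.
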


\noindent A well-known theorem of Bieberbach states that every compact flat manifold is finitely and normally covered by a flat torus. Theorem \ref{cheeger--gromoll} and the following prove a more general statement for manifolds with nonnegative Ricci curvature.

\begin{lemma}\label{normal covering}
    Suppose the diagram \eqref{diagram1} commutes isometrically and $\psi_1 : M_1 \times \tor^k \to M$ is a finite covering map. Then there exist manifolds $\tilde{M}_0$ and $\tilde{M}_1$, Riemannian metrics on $\tilde{M}_0$ and $\tilde{M}_1 \times \tor^k$, covering maps $\tilde{\psi}$ and $\tilde{\phi}$, and finite and normal covering maps $\tilde{\psi}_1$, $\zeta_1$, $\xi_0$, and $\xi_1$ such that the diagram
    \[
        \begin{gathered}
            \xymatrix{ \tilde{M}_0 \times \real^k \ar[r]^-{\tilde{\pi}_0} \ar[d]^-{\tilde{\psi}} & \real^k \ar[d]^-{\tilde{\phi}} \\ \tilde{M}_1 \times \tor^k \ar[r]^-{\tilde{\pi}_1} & \tor^k}
        \end{gathered}
    \]
    commutes isometrically, the diagram\\
    \begin{center}
    \begin{tikzcd}[row sep=scriptsize, column sep=scriptsize]
        & M_0 \times \real^k \arrow[rr, "\pi_0"] \arrow[dd, "\psi" near start] & & \real^k \arrow[dd, "\phi"] \\
        \tilde{M}_0 \times \real^k \arrow[ur, "\xi_0 \times \mathop{id}"] \arrow[rr, crossing over, "\tilde{\pi}_0" near end] \arrow[dd, "\tilde{\psi}"] & & \real^k \arrow[ur, "\mathop{id}"] \\
        & M_1 \times \tor^k \arrow[rr, "\pi_1" near start] \arrow[ddd, "\psi_1" near end] & & \tor^k \\
        \tilde{M}_1 \times \tor^k \arrow[ur, "\xi_1 \times \zeta_1"] \arrow[rr, "\tilde{\pi}_1" near end, crossing over] \arrow[ddr, "\tilde{\psi}_1"] & & \tor^k \arrow[ur, "\zeta_1"] \arrow[from=uu, crossing over, "\tilde{\phi}" near start] \\
        & & & \\
        & M & &
    \end{tikzcd}\\
    \end{center}
    commutes, and $\zeta_1$, $\xi_0$, and $\xi_1 \times \zeta_1$ are local isometries.
\end{lemma}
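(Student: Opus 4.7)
The task is to produce a finite normal cover $\tilde\psi_1 : \tilde M_1 \times \tor^k \to M$ that refines $\psi_1$ and retains a product structure, and then to fill in the rest of the cube by lifting $\psi$ and $\phi$. At heart the argument is group-theoretic: inside the finite-index subgroup $H := (\psi_1)_*\pi_1(M_1 \times \tor^k) \leq \pi_1(M)$, one must find a further finite-index subgroup that is normal in $\pi_1(M)$ and still respects the product decomposition $\pi_1(M_1) \times \mathbb{Z}^k$. The rigidity from Lemma \ref{nnrc diagrams}(b), that deck transformations on a Riemannian cover of $M$ of the form $M_0' \times \real^k$ act as products in $\mathscr{I}(M_0') \times \mathscr{I}(\real^k)$, is what makes such a subgroup exist.

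Write $\Gamma := \pi_1(M)$, acting by deck transformations on a common refinement $M_0' \times \real^k$ of $M_0 \times \real^k$ and the universal cover of $M_1 \times \tor^k$; in the Cheeger--Gromoll setting of Theorem \ref{cheeger--gromoll} one may take $M_0' = M_0$. Let $K := \bigcap_{g \in \Gamma} gHg^{-1}$ be the normal core of $H$, a normal finite-index subgroup of $\Gamma$ contained in $H$. Projecting $\Gamma$ to $\mathscr{I}(\real^k) \cong O(k) \ltimes \real^k$, the image contains $\phi_*\pi_1(\tor^k) \cong \mathbb{Z}^k$ as a finite-index translation lattice, so the image is crystallographic and by Bieberbach contains a normal finite-index subgroup of pure translations; let $T \lhd \Gamma$ be its preimage, and set $\tilde K := K \cap T$, which remains normal in $\Gamma$ and of finite index. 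Each element of $\tilde K$ acts as $(\sigma,\tau) \in \mathscr{I}(M_0') \times \real^k$ with $\tau$ a translation. The kernel $\tilde K_0$ of the projection to $\real^k$ is a discrete subgroup of the compact Lie group $\mathscr{I}(M_0')$, hence finite, and the image is a lattice in $\real^k$. The conjugation action of that lattice on $\tilde K_0$ factors through the finite group $\mathrm{Aut}(\tilde K_0)$, so one may pass to a further finite-index subgroup (again normal in $\Gamma$) over which the action is trivial; a standard cohomological argument then splits the resulting central extension, producing a normal finite-index subgroup of $\Gamma$ that is a direct product $\tilde K_0 \times \mathbb{Z}^k$ inside $\mathscr{I}(M_0') \times \real^k$.

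With this product subgroup in hand, set $\tilde M_1 \times \tor^k := (M_0' \times \real^k)/\tilde K$, where $\tilde M_1 := M_0'/\tilde K_0$ and the new torus factor is $\real^k$ modulo the translation lattice of $\tilde K$; the product form of $\tilde K$ makes this quotient an isometric product. Take $\xi_1 : \tilde M_1 \to M_1$ and $\zeta_1 : \tor^k \to \tor^k$ to be the induced finite normal covering maps, and set $\tilde\psi_1 := \psi_1 \circ (\xi_1 \times \zeta_1)$ and $\tilde\phi := \zeta_1 \circ \phi$. When $M_0$ is simply connected, take $\tilde M_0 := M_0$ and $\xi_0 := \mathrm{id}$; otherwise, $\tilde M_0$ is obtained by pulling back $\xi_1$ along the homomorphism $\pi_1(M_0) \to \pi_1(M_1)$ induced by $\psi|_{M_0 \times \{0\}}$. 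The lift $\tilde\psi$ is then determined by the compatible subgroup inclusions. Commutativity of the large cube is a matter of naturality of these constructions, and the local-isometry claims for $\zeta_1$, $\xi_1 \times \zeta_1$, and $\xi_0 \times \mathrm{id}$ follow because $\tilde K$ acts by product isometries preserving the metrics.

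The main technical obstacle is arranging that $\tilde K$ simultaneously has a product decomposition compatible with the splitting and remains normal in $\Gamma$: the normal-core construction alone gives normality but not a product, whereas imposing a product by intersecting with factor subgroups generally destroys normality. Combining the normal core with Bieberbach's selection of pure translations, followed by splitting the residual finite central extension via cohomology, is the maneuver that makes both requirements coexist.
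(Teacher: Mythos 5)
Your approach is genuinely different from the paper's and, as written, has gaps that I don't think can be patched without essentially abandoning the strategy. The paper's proof never touches isometry groups, Bieberbach's theorem, or group cohomology: it works entirely with subgroups of $\pi_1(M)$. Writing $G = (\psi_1)_*\pi_1(M_1 \times \tor^k) \cong G_1 \times G_2$, where $G_1$ and $G_2$ are the images of $\pi_1(M_1)$ and $\pi_1(\tor^k)$, it takes the normal core $H$ of $G$ in $\pi_1(M)$ (Lemma \ref{first algebraic lemma}) and then invokes the elementary fact (Lemma \ref{second algebraic lemma}) that a finite-index normal subgroup $H$ of a direct product $G_1 \times G_2$ contains a finite-index product $H_1 \times H_2$ with each $H_i$ normal in $G_i$. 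The observation you are missing is that the lemma only asks $\xi_1$ and $\zeta_1$ to be normal covers of $M_1$ and of $\tor^k$, i.e., it needs $H_i$ normal in $G_i$ rather than in $\pi_1(M)$; so no global rigidity of the $\pi_1(M)$-action is required, and the construction automatically respects the product decomposition of $M_1 \times \tor^k$ that the big diagram demands.

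Two concrete failures in your argument. First, you take $\Gamma = \pi_1(M)$ to act by product isometries in $\mathscr{I}(M_0') \times \mathscr{I}(\real^k)$. Lemma \ref{nnrc diagrams}(b) gives this only for the deck transformation group of $\psi$, which sees only $\pi_1(M_1 \times \tor^k)$, not the finitely many additional deck transformations contributed by $\psi_1$; outside the Cheeger--Gromoll setting there is no reason those extra isometries preserve the splitting (the product decomposition of the universal cover need not be unique if $\hat{M}_0$ itself has a Euclidean factor), and establishing such preservation is essentially the content one is trying to avoid. Second, even granting the product action, the central extension $1 \to \tilde{K}_0 \to \tilde{K} \to \mathbb{Z}^k \to 1$ need not split: $H^2(\mathbb{Z}^k;\tilde{K}_0)$ is nonzero for $k \geq 2$ (Heisenberg-type extensions), so "a standard cohomological argument" does not apply. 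One can kill the class by passing to a sublattice, but the resulting complement to $\tilde{K}_0$ is a non-canonical choice, and you give no reason that the product subgroup it generates is normal in $\Gamma$ --- exactly the tension you flag in your final paragraph and then do not resolve. There is also a compatibility problem at the end: your $\tilde{K}_0$, the kernel of the action on $\real^k$, need not be contained in $G_1$, so the quotient $\tilde{M}_1 = M_0'/\tilde{K}_0$ need not cover $M_1$ at all, and the maps $\xi_1$ and $\zeta_1$ required by the diagram are not defined.
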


\noindent The proof of Lemma \ref{normal covering} uses the following basic algebraic facts, which are consequences of the first and, respectively, second isomorphism theorems.

\begin{lemma}\label{first algebraic lemma}
    Let $H$ be a finite-index subgroup of a group $G$. Then $H$ contains a subgroup $I$ that is normal in $G$ and has index satisfying $[G : I] \leq [G : H]!$.
\end{lemma}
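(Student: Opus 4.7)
The plan is to realize $I$ as the kernel of the natural permutation representation of $G$ on the coset space $G/H$. Set $n = [G:H]$ and let $G/H$ denote the set of left cosets. Left multiplication induces a homomorphism
\[
    \varphi : G \to \mathop{Sym}(G/H), \qquad \varphi(g)(xH) = gxH,
\]
into the symmetric group on $G/H$, which has order $n!$. Define $I = \ker \varphi$.

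I would then verify three things in order. First, $I$ is normal in $G$ because it is the kernel of a homomorphism. Second, $I \subseteq H$: if $g \in I$ then $\varphi(g)$ fixes every coset, in particular $gH = H$, so $g \in H$. Third, the first isomorphism theorem gives $G/I \cong \varphi(G) \leq \mathop{Sym}(G/H)$, so
\[
    [G : I] = |\varphi(G)| \leq |\mathop{Sym}(G/H)| = n! = [G : H]!\textrm{.}
\]

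There is no serious obstacle here; the only mild subtlety worth flagging in the write-up is making sure the action is on the left cosets $G/H$ rather than on $H$ itself, so that the kernel lands inside $H$ (the stabilizer of the identity coset). This is exactly the standard "Cayley's theorem for coset spaces" construction, and the first isomorphism theorem is the only nontrivial ingredient, which matches the author's parenthetical remark.
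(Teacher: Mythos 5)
Your proof is correct and is exactly the standard argument the paper has in mind: the paper omits the proof, noting only that the lemma is a consequence of the first isomorphism theorem, and your construction of $I$ as the kernel of the left-multiplication action on $G/H$ (the normal core of $H$) is precisely that argument. Nothing further is needed.
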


\begin{lemma}\label{second algebraic lemma}
    Let $G_1$ and $G_2$ be groups and $H$ a finite-index normal subgroup of $G_1 \times G_2$. Then there exist normal subgroups $H_i$ of $G_i$, $i = 1,2$, such that $H_1 \times H_2 \subseteq H$, $[G_i : H_i] \leq [G_1 \times G_2 : H]$, and $[G_1 \times G_2 : H_1 \times H_2] \leq [G_1 \times G_2 : H]^2$.
\end{lemma}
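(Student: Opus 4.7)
The natural candidates for $H_1$ and $H_2$ are the slices $H_1 = \{ g_1 \in G_1 : (g_1, e_2) \in H \}$ and $H_2 = \{ g_2 \in G_2 : (e_1, g_2) \in H \}$, where $e_i$ denotes the identity of $G_i$. The plan is to verify that these slices are normal subgroups of $G_1$ and $G_2$, respectively, then use the second isomorphism theorem to bound $[G_i : H_i]$, and finally use multiplicativity of index in direct products to deduce the bound on $[G_1 \times G_2 : H_1 \times H_2]$.

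The normality of $H_1$ in $G_1$ follows directly from the normality of $H$ in $G_1 \times G_2$: for any $g_1 \in H_1$ and $x_1 \in G_1$, the conjugate $(x_1, e_2)(g_1, e_2)(x_1, e_2)^{-1} = (x_1 g_1 x_1^{-1}, e_2)$ lies in $H$, so $x_1 g_1 x_1^{-1} \in H_1$. Symmetrically, $H_2$ is normal in $G_2$. The inclusion $H_1 \times H_2 \subseteq H$ is immediate, since $(g_1, g_2) = (g_1, e_2)(e_1, g_2)$ whenever $g_1 \in H_1$ and $g_2 \in H_2$.

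For the index bound, identify $G_1$ with the subgroup $G_1 \times \{e_2\}$ of $G_1 \times G_2$, and note that $(G_1 \times \{e_2\}) \cap H = H_1 \times \{e_2\}$. The second isomorphism theorem gives
\[
    (G_1 \times \{e_2\}) H / H \;\cong\; (G_1 \times \{e_2\}) / (H_1 \times \{e_2\}) \;\cong\; G_1 / H_1.
\]
Since the left-hand side is a subgroup of $(G_1 \times G_2)/H$, which has order $n := [G_1 \times G_2 : H]$, one has $[G_1 : H_1] \leq n$. The same argument yields $[G_2 : H_2] \leq n$. Finally, since the cosets of $H_1 \times H_2$ in $G_1 \times G_2$ are precisely the products of cosets of $H_1$ in $G_1$ with cosets of $H_2$ in $G_2$, one has
\[
    [G_1 \times G_2 : H_1 \times H_2] = [G_1 : H_1] \cdot [G_2 : H_2] \leq n^2.
\]

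There is no real obstacle here; every step is routine group theory. The only substantive observation is the identification of the correct slices and the recognition that the second isomorphism theorem converts the ambient index $n$ directly into an upper bound for $[G_i : H_i]$.
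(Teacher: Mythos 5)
Your proof is correct and follows exactly the route the paper indicates: the paper offers no written proof but notes the lemma is a consequence of the second isomorphism theorem, which is precisely how you bound $[G_i : H_i]$ after taking $H_i$ to be the slice of $H$ along the $i$-th factor. Nothing further is needed.
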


\begin{proof}[of Lemma \ref{normal covering}]
    Fix $(\tilde{p},\tilde{x}) \in M_1 \times \tor^k$ and $(\hat{p},\hat{x}) \in \psi^{-1}(\tilde{p},\tilde{x})$ as basepoints for $\pi_1(M_1 \times \tor^k)$ and $\pi_1(M_0 \times \real^k)$, respectively. Write $G = (\psi_1)_*(\pi_1(M_1 \times \tor^k))$, $G_1 = (\psi_1 \circ \iota_{\tilde{x}})_*(\pi_1(M_1))$, and $G_2 = (\psi_1 \circ \iota_{\tilde{p}})_*(\pi_1(\tor^k))$, so that $G \cong G_1 \times G_2$. By Lemma \ref{first algebraic lemma}, $\pi_1(M)$ has a finite-index normal subgroup $H$ contained in $G$. By Lemma \ref{second algebraic lemma}, there exist finite-index normal subgroups $H_i$ of $G_i$ such that $H_1 \times H_2$ is a finite-index subgroup of $G$ contained in $H$. Thus $\tilde{H}_1 = (\psi_1 \circ \iota_{\tilde{x}})_*^{-1}(H_1)$ and $\tilde{H}_2 = (\psi_1 \circ \iota_{\tilde{p}})_*^{-1}(H_2)$ are finite-index normal subgroups of $G_1$ and $G_2$, respectively. It follows that there exist a manifold $\tilde{M}_1$, $\tilde{p}_1 \in \tilde{M}_1$, $\tilde{x}_1 \in \tor^k$, and finite and normal covering maps $\xi_1 : \tilde{M}_1 \to M_1$ and $\zeta_1 : \tor^k \to \tor^k$ such that $(\xi_1)_*(\pi_1(\tilde{M}_1)) = \tilde{H}_1$, $\xi_1(\tilde{p}_1) = \tilde{p}$, $\zeta_1(\tilde{x}_1) = \tilde{x}$, and $(\zeta_1)_*(\pi_1(\tor^k)) = \tilde{H}_2$. By construction, $\tilde{\psi}_1 = \psi_1 \circ (\xi_1 \circ \zeta_1)$ satisfies $(\tilde{\psi}_1)_* (\pi_1(\tilde{M}_1 \times \tor^k)) = H_1 \times H_2$ and, consequently, is finite and normal.

    By general theory, there exists a covering map $\tilde{\phi} : \real^k \to \tor^k$ such that $\phi = \zeta_1 \circ \tilde{\phi}$ and $\tilde{\phi}(\hat{x}) = \tilde{x}_1$. Note that $\chi = \rho_1 \circ \psi \circ \iota_{\hat{x}} : M_0 \to M_1$ is a covering map satisfying $\chi(\hat{p}) = \tilde{p}$, where $\rho_1 : M_1 \times \tor^k \to M_1$ is projection. Let $I = \tilde{H}_1 \cap \chi_*(\pi_1(M_0))$. Then there exist a manifold $\tilde{M}_0$, $\tilde{p}_0 \in \tilde{M}_0$, and covering maps $\tilde{\chi} : \tilde{M}_0 \to \tilde{M}_1$ and $\xi_0 : \tilde{M}_0 \to M_0$ such that $\xi_1 \circ \tilde{\chi} = \chi \circ \xi_0$, $\tilde{\chi}(\tilde{p}_0) = \tilde{p}_1$, $\xi_0(\tilde{p}_0) = \hat{p}$, and $(\xi_1 \circ \tilde{\chi})_*(\pi_1(\tilde{M}_0)) = I = (\chi \circ \xi_0)_*(\pi_1(\tilde{M}_0))$. Since $I$ is a normal and, by the second isomorphism theorem, finite-index subgroup of $\chi_*(\pi_1(M_0))$, $\xi_0$ is finite and normal.

    Note that the diagram
    \begin{equation}\label{diagram2}
        \xymatrix{ \hat{M}_0 \ar[r]^-{\xi_0} \ar[d]^-{\hat{\chi}} & M_0 \ar[r]^-{\iota_{\overline{x}}} \ar[d]^-\chi & M_0 \times N_0 \ar[d]^-{\psi} \\ \hat{M}_1 \ar[r]^-{\xi_1} & M_1 \ar[r]^-{\iota_{\tilde{x}}} & M_1 \times N_1 }
    \end{equation}
    commutes. It follows that $(\psi \circ \iota_{\tilde{x}} \circ \xi_0)_*(\pi_1(\tilde{M}_0))$ is a subgroup of $(\iota_{\tilde{x}} \circ \xi_1)_*(\pi_1(\tilde{M}_1))$ and, consequently, that $(\psi_0 \circ (\xi_0 \times \mathop{id}))_*(\pi_1(\tilde{M}_0 \times \real^k))$ is a subgroup of $(\xi_1 \times \zeta_1)_*(\pi_1(\tilde{M}_1 \times \tor^k))$. Therefore, there exists a covering map $\tilde{\psi} : \tilde{M}_0 \times \real^k \to \tilde{M}_1 \times \tor^k$ such that $(\xi_1 \times \zeta_1) \circ \tilde{\psi} = \psi \circ (\xi_0 \times \mathop{id})$ and $\tilde{\psi}(\tilde{p}_0,\tilde{x}_0) = (\tilde{p}_1,\tilde{x}_1)$.

    By construction, $\zeta_1 \circ \tilde{\pi}_1 \circ \tilde{\psi} = \zeta_1 \circ \tilde{\phi} \circ \tilde{\pi}_0$. Since $\tilde{\pi}_1 \circ \tilde{\psi}$ and $\tilde{\phi} \circ \tilde{\pi}_0$ agree at $(\tilde{p}_0,\tilde{x}_0)$, they must be the same function. Thus the second diagram commutes and, when $\tilde{M}_0$, $\tilde{M}_1 \times \tor^k$, and the upper $\tor^k$ are endowed with the pull-back metrics that make $\xi_0$, $\xi_1 \times \zeta_1$, and $\zeta_1$ local isometries, the first diagram commutes isometrically.
\end{proof}

\section{Proof of the main theorems}

When $M$ is a compact Riemannian manifold with nonnegative Ricci curvature, Theorem \ref{cheeger--gromoll} states that $M$ is finitely and locally isometrically covered by a manifold $M_1 \times \tor^k$ in a NNRC diagram \eqref{diagram1} that commutes isometrically and in which $M_0$ is compact and simply connected. It follows that $M_1$ is compact, each covering map $\chi = \rho_1 \circ \psi \circ \iota_{\hat{x}}$ in diagram \eqref{diagram2} is finite, and, consequently, that $\pi_1(M_1)$ is finite. If $N$ has no focal points, then $\pi_1(N)$ is torsion free, so every map $M_1 \times \tor^k \to N$ acts trivially on the fundamental group of $M_1$. Thus Theorems \ref{main theorem for nnrc} and \ref{product version for nnrc} are special cases of the following results.

\begin{theorem}\label{main theorem}
    Let $M$ be a compact Riemannian manifold, $\psi_1 : M_1 \times \tor^k \to M$ a finite covering map, where $M_1 \times \tor^k$ appears in a NNRC diagram \eqref{diagram1} that commutes isometrically, $N$ a complete Riemannian manifold with no focal points, and $[F]$ a homotopy class of maps from $M$ to $N$ such that $[F \circ \psi_1]$ acts trivially on $\pi_1(M_1)$. Then the following hold:\\
    \textbf{(a)} The set of totally geodesic maps in $[F]$ is path-connected;\\
    \textbf{(b)} If $[F]$ contains a totally geodesic map, then a map in $[F]$ is energy-minimizing if and only if it is totally geodesic;\\
    \textbf{(c)} If $N$ is compact, then $[F]$ contains a totally geodesic map.
\end{theorem}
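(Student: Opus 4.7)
My plan is to first reduce, via Lemma~\ref{normal covering}, to producing a $\Gamma$-invariant totally geodesic map on a finite normal cover $\tilde\psi_1 : \tilde{M}_1 \times \tor^k \to M$ with deck group $\Gamma$. By Lemma~\ref{nnrc diagrams}(a) --- applicable because $\rho(\hat N) = \infty$ since $N$ has no focal points --- totally geodesic maps acting trivially on $\pi_1(\tilde{M}_1)$ are precisely those that are constant on each $\tilde{M}_1$-fiber and totally geodesic (with respect to the flat metric) on each $\tor^k$-fiber. Setting $\Sigma = ([\sigma_1],\ldots,[\sigma_k])$ where the $[\sigma_i]$ are the images of the standard generators of $\pi_1(\tor^k)$ under $(F \circ \tilde\psi_1 \circ \iota_{\tilde p})_*$, and enlarging the tuple if necessary so that $G = \langle \Sigma \rangle$ is Abelian with $G = Z(G)$ in $\pi_1(N)$, Remark~\ref{loop map is totally geodesic} and Lemma~\ref{splitting of spaces of closed geodesics} identify the moduli of totally geodesic $\tor^k$-fiber maps in $[\Sigma]$ with $C_\Sigma \cong C \times \tor^m$, where $C$ is closed, strongly convex in $N_\Sigma$, and compact when $N$ is compact. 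The problem thus reduces to producing a $\Gamma$-fixed point in $C$.

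For part (c), I would construct such a fixed point via the Riemannian center of mass. Lift $F \circ \tilde\psi_1 \circ (\xi_1 \times \zeta_1) \circ \tilde\psi$ to $\hat F : \tilde{M}_0 \times \real^k \to \hat N$. By the flat torus theorem, $\min(G) \cong D \times \real^m \subseteq \hat N$ with $D$ closed, strongly convex, and compact (since $\psi_\Sigma$ embeds $D$ isometrically onto $C$). For each $\hat p \in \tilde{M}_0$, the restriction $\hat F(\hat p, \cdot) : \real^k \to \hat N$ is equivariant for the $\mathbb{Z}^k$-action via $G$; the integral techniques of \cite{Dibble2018a} --- which compare such equivariant maps with translation-equivariant maps into $\min(G)$ --- allow extraction of a continuous assignment $\Phi : \tilde{M}_0 \to D$, equivariant under the lift of $\Gamma$ to $\tilde{M}_0 \times \real^k$. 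Averaging $\Phi$ over a fundamental domain for $\pi_1(\tilde{M}_1)$ in $\tilde{M}_0$ via the center of mass in $D$ (well-defined by strong convexity and compactness, and compatible with the product structure by Lemmas~\ref{center of mass commutes with isometries} and~\ref{center of mass in a product space}) yields a single $\Gamma$-fixed point $c \in C$; the constant $\tilde{M}_1$-fiber map $(\tilde p, \theta) \mapsto \Upsilon_c(\theta)$ then descends to the required totally geodesic representative of $[F]$.

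For (a), two totally geodesic maps in $[F]$ correspond to $\Gamma$-fixed points $c_0, c_1 \in C$, and the unique minimal geodesic from $c_0$ to $c_1$ stays in $C$ by strong convexity and is itself $\Gamma$-invariant because $\Gamma$ acts isometrically on $N_\Sigma$ and fixes the endpoints; this yields a continuous path of totally geodesic maps in $[F]$. For (b), given a totally geodesic $f_0$ and an energy-minimizer $f^* \in [F]$ (existing by \cite{LiZhu2010}), a pointwise center-of-mass interpolation $f_s$ between lifts of $f_0$ and $f^*$ in $\hat N$ has $s \mapsto E(f_s)$ strictly convex, by strict convexity of $d^2(\cdot,\cdot)$ on strongly convex balls in $\hat N$, unless $f_0$ and $f^*$ agree pointwise. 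Since Lemma~\ref{splitting of spaces of closed geodesics}(b) gives all totally geodesic representatives the same energy, minimality of $E(f^*)$ combined with this convexity forces $f^* = f_0$ pointwise, so $f^*$ is totally geodesic; the converse follows since every totally geodesic map shares this minimal energy.

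The main obstacle will be the construction of $\Phi$ in part (c). The raw lift $\hat F(\hat p, \cdot)$ need not land in $\min(G)$, so isolating a canonical point of $D$ attached to each $\hat p$ requires the integral methods of \cite{Dibble2018a} to mediate between arbitrary $\mathbb{Z}^k$-equivariant maps into $\hat N$ and translation-equivariant maps into $\min(G)$; tracking $\Gamma$-equivariance through both this construction and the subsequent center-of-mass averaging is where the technical weight of the argument concentrates.
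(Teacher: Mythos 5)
Your overall architecture (reduce to a normal cover via Lemma~\ref{normal covering}, identify fiberwise totally geodesic maps with points of $C_\Sigma \cong C \times \tor^m$ via Lemma~\ref{nnrc diagrams}(a) and Remark~\ref{loop map is totally geodesic}, then handle the deck group $\Gamma$ by center-of-mass averaging) matches the paper's, but each part contains a gap, and the one in (b) is a genuine error. For (b), you claim that the center-of-mass interpolation $f_s$ between $f_0$ and $f^*$ has $s \mapsto E(f_s)$ strictly convex ``by strict convexity of $d^2(\cdot,\cdot)$ on strongly convex balls in $\hat N$.'' Convexity of the two-variable function $d(\cdot,\cdot)$ on $\hat N \times \hat N$ is equivalent to $N$ having nonpositive curvature; under the weaker hypothesis of no focal points one only gets convexity of $d(\cdot,\hat p)$ in one variable (see the discussion after Lemma~\ref{jacobi fields}). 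So the convexity of energy along geodesic homotopies --- the standard Hadamard-manifold argument --- is exactly what is unavailable here, and this is the point of the paper. The paper instead deduces (b) from Theorem~1.3(a) of \cite{Dibble2018a}, whose integral techniques replace the Bochner/convexity argument.

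For (c), the step you yourself flag as the main obstacle --- extracting a continuous, $\Gamma$-equivariant assignment $\Phi : \tilde M_0 \to D$ from the raw lift $\hat F$ --- is not carried out, and it is not a routine application of the cited tools: the raw lift does not map into $\min(G)$, and no canonical projection onto $D$ is available. The paper takes a different and complete route: it first \emph{homotopes} $F\circ\tilde\psi_1$ to a map of the form $\Upsilon_c \circ \pi_1$ with $c \in C_\Sigma$ (contracting the $M_1$-fibers inside a lift, invoking Lemma~\ref{maps determined by fundamental group}, and using nonemptiness of $C_\Sigma$ from compactness of $N$), and only then restores $\Gamma$-invariance by averaging the $m$ deck-translated \emph{homotopies} with Lemma~\ref{gluing technique for toruses}, whose conclusion guarantees the averaged endpoint is still fiberwise totally geodesic. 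No fixed point in $C$ is ever produced or needed. For (a), your $\Gamma$-invariance argument relies on uniqueness of the minimal geodesic between the two points of $C_\Sigma$, but $C_\Sigma \cong C \times \tor^m$ is not strongly convex in the torus directions, so minimal geodesics between them need not be unique and need not be preserved by $\Gamma$. The paper sidesteps this by choosing any minimal geodesic in $C_\Sigma$ (via Theorem~\ref{flat torus theorem}(a) upstairs) and then averaging the resulting homotopy over $\Gamma$, exactly as in (c).
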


\begin{theorem}\label{product version}
    Let $M$ be a compact Riemannian manifold, $\psi_1 : M_1 \times \tor^k \to M$ a finite covering map, where $M_1 \times \tor^k$ appears in a NNRC diagram \eqref{diagram1} that commutes isometrically, $N$ a compact Riemannian manifold with no focal points, $W$ a manifold, and $f : W \times M \to N$ a continuous map such that $f \circ (\mathop{id} \times \psi_1) : W \times M_1 \times \tor^k \to N$ acts trivially on $\pi_1(M_1)$.  Then $f$ is homotopic to a map that's totally geodesic on each $M$-fiber.
\end{theorem}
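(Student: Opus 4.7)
The plan is to build a map $g : W \times M \to N$ that is totally geodesic on each $M$-fiber by continuously choosing, for each $w$, a point in the parameter space $C_\Sigma \cong C \times \tor^m$ of totally geodesic representatives, and then to homotope $f$ to $g$ fiberwise along minimal geodesics in the universal cover $\hat{N}$. The selection will use the Riemannian center of mass, and the descent from the cover $M_1 \times \tor^k \to M$ will follow from the equivariance results of Section 3.

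First I would reduce to $W$ connected and, invoking Lemma \ref{normal covering} if necessary, to the case in which the finite cover has normal deck group $\Gamma$. Writing $\tilde{f} = f \circ (\mathop{id} \times \psi_1)$, the hypothesis that $\tilde{f}$ acts trivially on $\pi_1(M_1)$ together with continuity of $\tilde{f}$ yields a single tuple $\Sigma = ([\sigma_1], \ldots, [\sigma_k]) \in \pi_1(N)^k$ such that every restriction $\tilde{f}(w, m_1, \cdot) : \tor^k \to N$ induces, up to path-conjugation, the homomorphism sending the canonical generators of $\pi_1(\tor^k)$ to $\Sigma$. Denote by $G$ the abelian subgroup of $\pi_1(N)$ generated by $\Sigma$; Lemma \ref{splitting of spaces of closed geodesics} gives $C_\Sigma \cong C \times \tor^m$ with $C$ compact and strongly convex, and by Remark \ref{loop map is totally geodesic} combined with Lemma \ref{nnrc diagrams}(a), producing a fiberwise-totally-geodesic extension amounts to selecting a point of $C_\Sigma$ continuously and $\Gamma$-equivariantly in $(w, m_1)$.

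The key step is that selection. For each $(w, m_1)$, I would lift $\tilde{f}(w, m_1, \cdot)$ to a map $\real^k \to \hat{N}$; because $\hat{N}$ has $\rho(\hat{N}) = \infty$, every ball is strongly convex and the Riemannian center of mass of the push-forward of normalized Lebesgue measure on $[0, 1]^k$ is well-defined. Using Lemma \ref{center of mass in a product space} and the product splitting $\min(G) \cong D \times \real^m$ from Theorem \ref{flat torus theorem}, I would project the result to a canonical $(d(w, m_1), x(w, m_1)) \in D \times \real^m$, and reduce the $\real^m$-coordinate modulo $G$ to obtain $\nu(w, m_1) \in C \times \tor^m \cong C_\Sigma$. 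Lemma \ref{center of mass commutes with isometries}, applied to $\Gamma$ acting isometrically on $M_1 \times \tor^k$ (as in Lemma \ref{nnrc diagrams}(b) lifted through the cover) and to the corresponding deck transformations on $\hat{N}$, ensures that $\nu$ is $\Gamma$-equivariant, so $\tilde{g}(w, m_1, \vartheta) := \Upsilon_{\nu(w, m_1)}(\vartheta)$ descends to a continuous $g : W \times M \to N$ that is totally geodesic on each $M$-fiber. The homotopy from $f$ to $g$ is then built by lifting both to equivariant $\hat{f}, \hat{g} : W \times M_0 \times \real^k \to \hat{N}$ and connecting them pointwise by the unique minimal geodesic:
\[
    \hat{H}(t, w, p, x) = \exp_{\hat{f}(w, p, x)} \bigl( t \exp_{\hat{f}(w, p, x)}^{-1}(\hat{g}(w, p, x)) \bigr).
\]
Equivariance is preserved, so $\hat{H}$ descends to the desired homotopy on $[0, 1] \times W \times M$.

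The main obstacle I expect is the continuous, $\Gamma$-equivariant selection of $\nu \in C_\Sigma$. The raw center of mass in $\hat{N}$ need not lie in $\min(G)$, so one must canonically project into $D \times \real^m$, and the projection must respect the product structure; additionally, the $\real^m$-coordinate must descend to a genuinely $\tor^m$-valued continuous function, without wrap-around discontinuities. If the direct projection proves too delicate, a natural alternative is an iterative averaging scheme that alternately integrates $\tilde{f}$ over $M_1$-fibers (exploiting compactness of $M_0$) and over finite $\Gamma$-orbits in the cover, with the strict convexity of squared distance in $\hat{N}$ forcing contraction into the totally geodesic family — in the spirit of Hartman's monotonicity, but with the heat flow replaced by the center of mass.
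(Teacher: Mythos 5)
Your approach diverges from the paper's in a way that leaves two genuine gaps. The first is concrete: your selected parameter $\nu(w,m_1) \in C_\Sigma$ is built from the center of mass of the image of the single $\tor^k$-fiber over $(w,m_1)$, so it genuinely depends on $m_1$. But by Lemma \ref{nnrc diagrams}(a), a map is totally geodesic on an $(M_1 \times \tor^k)$-fiber only if it is \emph{constant} along each $M_1$-fiber in addition to being totally geodesic along each $\tor^k$-fiber. Your $\tilde{g}(w,m_1,\vartheta) = \Upsilon_{\nu(w,m_1)}(\vartheta)$ satisfies the second condition but not the first, so it is not totally geodesic on the $M$-fibers and the theorem is not proved. (The fix — averaging over the entire $(M_1\times\tor^k)$-fiber rather than a single $\tor^k$-fiber, which would also make $\Gamma$-invariance automatic — is available but is not what you wrote, and it forces you to redo the equivariance and homotopy-class bookkeeping.)

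The second gap is the one you flag yourself and then leave unresolved: the raw center of mass does not lie in $\min(G)$, and your argument hinges entirely on a continuous, equivariant projection of it into $\min(G) \cong D \times \real^m$. Nearest-point projection onto a closed convex set is plausibly well defined here (strict convexity of $d^2(\cdot,y)$ when $\rho(\hat{N}) = \infty$ gives uniqueness), but you neither construct it nor prove its continuity, and the fallback you offer — an iterative averaging scheme ``in the spirit of Hartman's monotonicity'' — is a gesture at an entirely different, unproven argument. The paper's proof is organized precisely to avoid ever needing such a projection: it first homotopes $f$ \emph{locally} over contractible neighborhoods $V_p \subseteq W$ all the way into the totally geodesic family (via the loop maps $\Upsilon$ and an arbitrary path into $C_\Sigma$, no canonicity required), and only \emph{then} applies the center of mass — to finitely many maps whose endpoints already lie in $\min(G)$, where Lemma \ref{center of mass in a product space} guarantees the average stays in $\min(G)$ and remains totally geodesic; the local choices are reconciled by a partition of unity on $W$ rather than by canonicity. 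Your plan front-loads the center of mass before reaching $\min(G)$, which is exactly what creates the projection problem. As written, the proposal is not a proof.
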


\noindent The proof of Theorems \ref{main theorem} and \ref{product version} uses in a crucial way a center-of-mass gluing technique developed by Cao--Cheeger--Rong \cite{CaoCheegerRong2004} in the context of nonpositive curvature. The following extends the key idea to the case of no focal points. Note that, when $N$ has no focal points, $\rho(\hat{N}) = \infty$, so the center of mass $\Phi_m$ is defined for any mass distribution $m : Z \to \hat{N}$.

\begin{lemma}\label{gluing technique for toruses}
    Let $N$ be a complete Riemannian manifold with no focal points, $h_1,\ldots,h_m : [a,b] \times \tor^k \to N$ continuous functions such that $h_i(a,\cdot) = h_j(a,\cdot)$ for all $i,j$, and $\Lambda = (\lambda_1,\ldots,\lambda_m) \in [0,1]^m$ such that $\sum_i \lambda_i = 1$. Then there exists a continuous function $h : [a,b] \times \tor^k \to N$ characterized by the following property:

    \vspace{2pt}

    \begin{center}
        (*) If $\hat{h}_1,\ldots,\hat{h}_m : [a,b] \times \real^k \to \hat{N}$ are lifts of $h_1,\ldots,h_m$ that agree on $\{ a \} \times \real^k$, then $h(s,\vartheta) = \pi \circ \Phi_\Lambda(\hat{h}_1(s,\hat{\vartheta}),\ldots,\hat{h}_m(s,\hat{\vartheta}))$ whenever $s \in [a,b]$ and $\hat{\vartheta} \in \pi^{-1}(\vartheta)$.
    \end{center}

    \vspace{2pt}

    \noindent Moreover, if each $h_i(b,\cdot)$ is totally geodesic, then $h(b,\cdot)$ is totally geodesic.
\end{lemma}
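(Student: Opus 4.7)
The plan is to verify three things: well-definedness of $h$, its continuity, and that $h(b,\cdot)$ is totally geodesic whenever each $h_i(b,\cdot)$ is. The observation that anchors everything is that, because all lifts $\hat h_i$ coincide on $\{a\}\times\real^k$, they share a common equivariance: writing $\hat h_i(s,\hat\vartheta+\lambda) = g_{i,\lambda,s}\cdot\hat h_i(s,\hat\vartheta)$ for $\lambda\in\mathbb{Z}^k$, a deck transformation is determined by its value at one point, so agreement at $s=a$ forces $g_{i,\lambda,a}=g_{j,\lambda,a}$; by continuity in $s$ and discreteness of the deck group, $g_{i,\lambda,s}$ is independent of both $i$ and $s$. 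Call this common deck transformation $g_\lambda$ and set $G=\{g_\lambda : \lambda\in\mathbb{Z}^k\}$; since $N$ has no conjugate points $\pi_1(N)$ is torsion free, so $G\cong\mathbb{Z}^m$ for some $m\le k$. Well-definedness of $h$ is then immediate: two preimages of $\vartheta\in\tor^k$ differ by some $\lambda$, each $\hat h_i$ transforms by the same $g_\lambda$, and Lemma \ref{center of mass commutes with isometries} (which applies because $\rho(\hat N)=\infty$) lets $g_\lambda$ pull out of $\Phi_\Lambda$ only to be killed by $\pi$. Continuity of $h$ reduces to continuity of $\Phi_\Lambda$ in its inputs, which I will take as a standard fact.

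The totally geodesic claim at $s=b$ is where the work is. The first step is to show each $\hat h_i(b,\cdot)$ takes values in $\min(G)$: for any $\lambda\in\mathbb{Z}^k$ and $\hat\vartheta$, the curve $\gamma(t) := \hat h_i(b,\hat\vartheta+t\lambda)$ is a geodesic in $\hat N$ by total geodesicity and satisfies $\gamma(1)=g_\lambda\gamma(0)$. If $g_\lambda\ne\mathrm{id}$, freeness of the deck action forces $\gamma$ to be non-constant, making it an axis of $g_\lambda$; if $g_\lambda=\mathrm{id}$, then $\min(g_\lambda)=\hat N$ tautologically. In either case $\hat h_i(b,\hat\vartheta)\in\min(g_\lambda)$, and intersecting over $\lambda$ gives $\hat h_i(b,\hat\vartheta)\in\min(G)$. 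By Theorem \ref{flat torus theorem}, $\min(G)$ is isometric to $D\times\real^m$, with $G$ acting trivially on $D$ and by translation on the $\real^m$-factor via some homomorphism $\tau:\mathbb{Z}^k\to\real^m$.

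Writing $\hat h_i(b,\hat\vartheta)=(d_i(\hat\vartheta),v_i(\hat\vartheta))$, both coordinate maps are totally geodesic. Since $G$ acts trivially on $D$, $d_i$ is $\mathbb{Z}^k$-periodic and so has bounded image; since $D$ sits strongly convexly in $\hat N$ and $\hat N$ has no conjugate points (so its exponential map is a diffeomorphism and non-constant geodesics are unbounded), $d_i$ must be constant. The map $v_i:\real^k\to\real^m$ is totally geodesic between flat spaces, hence affine, $v_i(\hat\vartheta)=A_i\hat\vartheta+b_i$; equivariance $v_i(\hat\vartheta+\lambda)=v_i(\hat\vartheta)+\tau(\lambda)$ forces $A_i\lambda=\tau(\lambda)$ on $\mathbb{Z}^k$, pinning down $A_i$ as the common linear extension of $\tau$. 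Lemma \ref{center of mass in a product space} then computes the center of mass componentwise: its $D$-component is a constant $\tilde d_0\in D$, and its $\real^m$-component is the Euclidean weighted mean $A\hat\vartheta+\sum_i\lambda_i b_i$. This is affine in $\hat\vartheta$ inside $\min(G)\cong D\times\real^m$, hence a totally geodesic map $\real^k\to\hat N$, and projecting through the local isometry $\pi$ gives that $h(b,\cdot)$ is totally geodesic. The main obstacle I anticipate is the constancy of $d_i$, which quietly uses the no-conjugate-points property embedded inside no focal points; without it the decomposition would survive but the $D$-component could vary, and the center-of-mass argument in $D$ would need a separate (and possibly false) justification.
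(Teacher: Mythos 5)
Your proof is correct and follows essentially the same route as the paper's: well-definedness via equivariance of the center of mass under deck transformations, then landing $\hat h_i(b,\cdot)$ in $\min(G)$, splitting $\min(G)\cong D\times\real^m$ by the flat torus theorem, and reducing the center of mass to the Euclidean weighted average in the $\real^m$-factor via Lemma \ref{center of mass in a product space}. You additionally justify two steps the paper only asserts — that the images at $s=b$ lie in $\min(G)$ (the axis argument) and that the $D$-components are constant — and both justifications are sound.
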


\begin{proof}
     According to Lemma \ref{center of mass commutes with isometries}, $h$ is well defined by (*). Suppose each $h_i(b,\cdot)$ is totally geodesic. Let $\hat{h}_i$ be lifts of $h_i$ as in (*), and, for each $\ell = 1,\ldots,k$, let $\sigma_\ell$ be the loop $h_i(a,s_\ell(\cdot))$. Then $\hat{h}_i(\{ b \} \times \real^k) \subseteq \min(G)$, where $G$ is the subgroup of $\pi_1(N)$ generated by $[\sigma_1],\ldots,[\sigma_k]$. With respect to the isometric splitting of $\min(G)$ as $D \times \real^k$ in Theorem \ref{flat torus theorem}, in which $D$ is closed and strongly convex, each $\hat{h}_i(b,\cdot)$ must be of the form $\hat{h}_i(b,\cdot) = (d_i,\hat{H}_i(\cdot))$ for $d_i \in D$ and totally geodesic $\hat{H}_i : \real^k \to \real^k$. Note that $\Phi_\Lambda(d_1,\ldots,d_k) \in D$. By Lemma \ref{center of mass in a product space}, $\Phi_\Lambda(\hat{h}_1(b,\hat{\vartheta}),\ldots,\hat{h}_m(b,\hat{\vartheta})) = \big( \Phi_\Lambda(d_1,\ldots,d_k),\Phi_\Lambda(\hat{H}_1(\hat{\vartheta}),\ldots,\hat{H}_k(\hat{\vartheta})) \big)$. In $\real^k$, the center of mass is the usual weighted average, so $\hat{\vartheta} \mapsto \Phi_\Lambda(\hat{H}_1(\hat{\vartheta}),\ldots,\hat{H}_k(\hat{\vartheta}))$ is totally geodesic. The result follows from (*).
\end{proof}

\noindent Using Lemma \ref{gluing technique for toruses} and a partition of unity, one may extend this gluing technique to maps defined on products $W \times M_1 \times \tor^k$.

\begin{lemma}\label{glue together locally defined homotopies}
    Let $N$ be a compact Riemannian manifold with no focal points, $W$ and $M_1$ connected manifolds, and $f : W \times M_1 \times \tor^k \to N$ a continuous function that acts trivially on $\pi_1(M_1)$. Then there exists a continuous function $H : [0,1] \times W \times M_1 \times \tor^k \to N$ such that $H(0,\cdot) = f(\cdot)$ and $H(1,\cdot)$ is constant along each $M_1$-fiber and totally geodesic along each $\tor^k$-fiber.
\end{lemma}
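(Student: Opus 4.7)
The plan is to construct $H$ by averaging locally defined homotopies in $\hat{N}$, combining a partition of unity on $W$ with the center-of-mass technique of Lemma \ref{gluing technique for toruses}. For each $w_0 \in W$, I first choose a simply-connected open neighborhood $U = U(w_0) \subseteq W$ and produce a fiber-wise totally geodesic map $T_U : \tor^k \to N$ in the homotopy class of $f(w_0, p_0, \cdot)$ for some fixed $p_0 \in M_1$. The existence of $T_U$ follows from compactness of $N$ via Lemma \ref{splitting of spaces of closed geodesics}(c) applied to the maximal abelian subgroup of $\pi_1(N)$ containing the image of $f(w_0, p_0, \cdot)_*$, together with Remark \ref{loop map is totally geodesic}. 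Since $N$ is a $K(\pi_1(N),1)$ by the no conjugate points hypothesis, and since both $f|_{U \times M_1 \times \tor^k}$ and $(w,p,\theta) \mapsto T_U(\theta)$ act trivially on $\pi_1(U) \times \pi_1(M_1)$ and induce the same homomorphism on $\pi_1(\tor^k)$ up to conjugation, they are homotopic. An explicit homotopy $H_U : [0,1] \times U \times M_1 \times \tor^k \to N$ is built by lifting $f \circ (\mathop{id} \times \mathop{id} \times \varphi)$ to $\hat{f}_U : U \times M_1 \times \real^k \to \hat{N}$ (possible since $U$ is simply connected and $f$ acts trivially on $\pi_1(M_1)$) and $T_U \circ \varphi$ to $\hat{T}_U : \real^k \to \hat{N}$ with matching $\pi_1(\tor^k)$-equivariance, then setting
\[
    H_U(t, w, p, \theta) = \pi\bigl( \exp_{\hat{f}_U(w, p, \hat{\theta})}\bigl(t \exp^{-1}_{\hat{f}_U(w, p, \hat{\theta})}(\hat{T}_U(\hat{\theta}))\bigr) \bigr),
\]
which is well-defined (independent of $\hat\theta \in \varphi^{-1}(\theta)$) by equivariance, uses $\rho(\hat{N}) = \infty$, and interpolates between $f|_{U \times M_1 \times \tor^k}$ and the constant-in-$(w,p)$ map $T_U$.

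Next I refine $\{U(w_0)\}$ to a locally finite open cover $\{U_\alpha\}$ of $W$ and take a subordinate partition of unity $\{\rho_\alpha\}$. For each $(w, p) \in W \times M_1$ and any local continuous choice of lift $\hat{f}(w, p, \cdot) : \real^k \to \hat{N}$ of $f(w, p, \cdot) \circ \varphi$, I lift each $H_{U_\alpha}(\cdot, w, p, \cdot)$ to $\hat{H}_\alpha(\cdot, w, p, \cdot) : [0,1] \times \real^k \to \hat{N}$ starting at $\hat{f}(w, p, \cdot)$, and define
\[
    H(t, w, p, \theta) = \pi\bigl( \Phi_{(\rho_{\alpha_i}(w))_i}\bigl(\hat{H}_{\alpha_1}(t, w, p, \hat{\theta}), \ldots, \hat{H}_{\alpha_m}(t, w, p, \hat{\theta})\bigr) \bigr)
\]
for any $\hat{\theta} \in \varphi^{-1}(\theta)$, where $\alpha_1, \ldots, \alpha_m$ index the finite set of $\alpha$ with $w \in U_\alpha$. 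By Lemma \ref{center of mass commutes with isometries}, $H$ is independent of the choices of $\hat\theta$ and of the local lift $\hat f$, so $H$ is globally defined and continuous on $[0,1] \times W \times M_1 \times \tor^k$. At $t=0$ all lifts coincide, so $H(0, \cdot) = f$.

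The main technical step, and the main obstacle I expect, is verifying the properties of $H(1, \cdot)$. Each $\hat{H}_\alpha(1, w, p, \cdot)$ is a lift of the fixed map $T_\alpha \circ \varphi$; as $(w, p)$ varies continuously over the connected set $U_\alpha \times M_1$, this family of lifts varies continuously in the compact-open topology. Because $\pi_1(N)$ is torsion-free and acts freely on $\hat{N}$ (using no conjugate points), the set of all lifts of a fixed map is discrete in the compact-open topology, so the family is constant: $\hat{H}_\alpha(1, w, p, \cdot) = \hat{T}_\alpha^*$ depends on neither $p$ nor $w$ within $U_\alpha$. Since the weights $\rho_\alpha(w)$ also do not depend on $p$, the projected center of mass $H(1, w, p, \cdot)$ is constant in $p$. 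For total geodesy on each $\tor^k$-fiber, the argument in the proof of Lemma \ref{gluing technique for toruses} applies pointwise in $w$: all the $\hat{T}_\alpha^*$ send $\real^k$ totally geodesically into a common $\min(G) \cong D \times \real^k$, where $G \leq \pi_1(N)$ is the image of $\pi_1(\tor^k)$ under the common $\pi_1$-map of the $T_\alpha$, and by Lemma \ref{center of mass in a product space} the center of mass factors as a pair whose $\real^k$-component is an affine average of affine maps, hence affine. This yields the desired total geodesy after projection, completing the proof.
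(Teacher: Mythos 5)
Your proposal is correct and follows essentially the same strategy as the paper: local homotopies over a cover of $W$ taking $f$ to maps that are constant in $(w,p)$ and totally geodesic on the $\tor^k$-fibers, then glued by a partition of unity on $W$ via the center of mass in $\hat{N}$ exactly as in Lemma \ref{gluing technique for toruses}. The only differences are cosmetic: you compress the paper's three concatenated local homotopies (contracting the $(V_p \times M_1)$-directions, passing to a loop map, then sliding into $C_\Sigma$) into a single geodesic interpolation in $\hat{N}$, and you make explicit, via discreteness of the set of lifts of a fixed map, why $\hat{H}_\alpha(1,w,p,\cdot)$ is independent of $(w,p)$ --- a point the paper leaves as ``routine to check.''
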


\begin{proof}
    Fix ${p} \in W$ and $x_0 \in M_1$, and let $V_{{p}} \subseteq W$ be any contractible neighborhood of ${p}$. Then $f|_{V_{{p}} \times M_1 \times \tor^k}$ lifts to a map $\hat{f}_{{p}} : V_{{p}} \times M_1 \times \real^k \to \hat{N}$, and for each such lift the map $\hat{h}_{{p}} : [0,1] \times V_{{p}} \times M_1 \times \real^k \to \hat{N}$ defined by
    \[
        \hat{h}_{{p}}(s,w,x,\hat{\vartheta}) = \exp_{\hat{f}_{{p}}(w,x,\hat{\vartheta})}\big( s\exp_{\hat{f}_{{p}}(w,x,\hat{\vartheta})}^{-1}(\hat{f}_{{p}}({p},x_0,\hat{\vartheta})) \big)
    \]
    descends to a homotopy from $f|_{V_{{p}} \times M_1 \times \tor^k}$ to a map $F_{{p}} : V_{{p}} \times M_1 \times \tor^k \to N$ that's constant along each $(V_{{p}} \times M_1)$-fiber. For each $i = 1,\ldots,k$, denote by $\sigma_i(\cdot)$ the loop $f({p},x_0,s_i(\cdot))$ and by $u_i$ the unique vector such that $\gamma_{u_i} \in [\sigma_i]$. By Lemma \ref{maps determined by fundamental group}, $F_{{p}}$ is homotopic to a map $G_{{p}}$ that's constant along each $(V_{{p}} \times M_1)$-fiber and agrees with $\Upsilon_{u_1 \otimes \cdots \otimes u_k}$ along each $\tor^k$-fiber. Let $\Sigma = ([\sigma_1],\ldots,[\sigma_k]) \in \pi_1(N)^k$. By Lemma \ref{splitting of spaces of closed geodesics}, $C_\Sigma$ is nonempty. For any path $\tilde{\alpha} : [0,1] \to N_\Sigma$ from $u_1 \otimes \cdots \otimes u_k$ to $C_\Sigma$, the map $(s,w,m,\vartheta) \mapsto \Upsilon_{\tilde{\alpha}(s)}(\vartheta)$ for $s \in [0,1]$ is a homotopy from $G_{{p}}$ to a map that's constant along each $(V_{{p}} \times M_1)$-fiber and totally geodesic along each $\tor^k$-fiber.

    Therefore, around each $w \in W$, there exists a homotopy $h_w : [0,1] \times V_w \times M_1 \times \tor^k \to N$ from $f|_{V_w \times M_1 \times \tor^k}$ to a map that's constant along each $(V_w \times M_1)$-fiber and totally geodesic along each $\tor^k$-fiber. Choose open sets $U_w \subset \overline{U}_w \subset V_w$, and let $\{ \lambda_w \}$ be a partition of unity subordinate to the open cover $\{ U_w \}$ of $W$. For each ${p} \in W$, let $W_{{p}}$ be a neighborhood of ${p}$ on which only finitely many $\lambda_w$, say $\lambda_{w_1},\ldots,\lambda_{w_m}$, are nonzero. Without loss of generality, one may suppose that ${p}$ is in the support of each $\lambda_{w_i}$ and, consequently, that ${p} \in V_{w_i}$. One may also shrink $W_{{p}}$, if necessary, so that each $h_{w_i}$ is defined on $[a,b] \times W_{{p}} \times M \times \tor^k$. In this case, $\Lambda_{{p}} = (\lambda_{w_1},\ldots,\lambda_{w_m})$ is a continuous function from $W_{{p}}$ into $[0,1]^m$ such that $\sum_{i=1}^k \lambda_{w_i} = 1$.

    For each $w \in W_{{p}}$ and $x \in M$, let $H_{{p},w,x} : [a,b] \times \{ w \} \times \{ x \} \times \tor^k \to N$ be the map satisfying (*) in Lemma \ref{gluing technique for toruses}, where $\Lambda = \Lambda(w)$ and $h_i = h_{w_i}|_{\{ w \} \times \{ x \} \times \tor^k}$. Then $H_{{p},w,x}(b,\cdot)$ is totally geodesic along each $\tor^k$-fiber. Define $H_{{p}} : [a,b] \times W_p \times M \times \tor^k \to N$ by $H_{{p}}(s,w,x,\vartheta) = H_{{p},w,x}(s,\vartheta)$. Since $\Phi_\Lambda(\hat{x}_1,\ldots,\hat{x}_m)$ varies continuously with $\Lambda$ and the $\hat{x}_i$, $H_{{p}}$ is continuous. It is routine to check that setting $H(s,w,x,\vartheta) = H_{{p}}(s,w,x,\vartheta)$ for any ${p}$ such that $w \in W_{{p}}$ yields a well-defined function $H : [a,b] \times W \times M \times \tor^k \to N$ with the desired properties.
\end{proof}

\noindent When $M$ is finitely and normally covered by a manifold $M_1 \times \tor^k$ in an NNRC diagram \eqref{diagram1} that commutes isometrically, maps $W \times M_1 \times \tor^k \to N$ that are totally geodesic on the $(M_1 \times \tor^k)$-fibers may be used to construct maps $W \times M \to N$ that are totally geodesic on the $M$-fibers. The idea is to average over the deck transformation group of $M_1 \times \tor^k \to M$ and apply Lemma \ref{nnrc diagrams}. Combining this with Lemma \ref{normal covering} proves Theorem \ref{product version}.

\begin{proof}[of Theorem \ref{product version}]
    By Lemma \ref{normal covering}, one may suppose without loss of generality that $\psi_1$ is normal. Apply Lemma \ref{glue together locally defined homotopies} to produce a homotopy $\tilde{H} : [0,1] \times W \times M_1 \times \tor^k \to N$ from $\tilde{f} = f \circ (\mathop{id} \times \psi_1)$ to a map $\tilde{F}$ that's constant along each $M_1$-fiber and totally geodesic along each $\tor^k$-fiber. By Lemma \ref{nnrc diagrams}(a), $\tilde{F}$ is totally geodesic along each $(M_1 \times \tor^k)$-fiber. Lift $\tilde{H}$ to a homotopy $\hat{H} : [0,1] \times W \times M_0 \times \real^k \to \hat{N}$ that ends in a map $\hat{F}$ that's totally geodesic along each $(M_0 \times \real^k)$-fiber.

    Let $\Lambda = (1/m,\ldots,1/m) \in [0,1]^m$, where $m$ is the number of sheets of $\psi_1$. Let $\Gamma = \{ \gamma_1,\ldots,\gamma_m \}$ be the deck transformation group of $\psi_1$, where each $\gamma_i$ is identified with an element of $\pi_1(M)$, and let $\xi_i$ denote the deck transformation of $\pi$ corresponding to $f_*(\gamma_i)$. By Lemma \ref{nnrc diagrams}(b), there exist deck transformations $\hat{\gamma}_i$ of $\psi_1 \circ \psi$, each of which splits as $\hat{\gamma}_i = \hat{\alpha}_i \times \hat{\beta}_i$ for isometries $\hat{\alpha}_i$ of $M_0$ and $\hat{\beta}_i$ of $\real^k$, such that each diagram
    \[
        \xymatrix{ M_0 \times \real^k \ar[r]^-{\hat{\gamma}_i} \ar[d]^-\psi & M_0 \times \real^k \ar[d]^-\psi \\ M_1 \times \tor^k \ar[r]^-{\tilde{\gamma}_i} & M_1 \times \tor^k }
    \]
    commutes. Define maps $\tilde{h}_i : [0,1] \times W \times M_1 \times \tor^k \to N$ by
    \[
        \tilde{h}_i(s,w,\tilde{x}) = \pi \circ \xi_i^{-1} \circ \hat{H}(s,w,\hat{\gamma}_i(\hat{x}))
    \]
    for any $\hat{x} \in \psi^{-1}(\tilde{x})$. It is routine to check that each $\tilde{h}_i$ is well defined and satisfies $\tilde{h}_i(0,\cdot) = \tilde{f}(\cdot)$. Since each $\hat{H}(1,\cdot) = \hat{F}(\cdot)$ is totally geodesic along each $(M_0 \times \real^k)$-fiber, so is each $\tilde{h}_i(1,\cdot)$. Since $\hat{F}$ is constant along each $M_0$-fiber, the splittings $\hat{\gamma}_i = \hat{\alpha}_i \times \hat{\beta}_i$ ensure that each $\tilde{h}_i(1,\cdot)$ is constant along each $M_1$-fiber.

    Define a map $\tilde{h} : [0,1] \times W \times M_1 \times \tor^k \to N$ by
    \[
        \tilde{h}(s,w,\tilde{x}) = \pi \circ \Phi_\Lambda \big( \xi_1^{-1} \circ \hat{H}(s,w,\hat{\gamma}_1(\hat{x})),\ldots,\xi_m^{-1} \circ \hat{H}(s,w,\hat{\gamma}_m(\hat{x})) \big)
    \]
    for any $\hat{x} \in \psi^{-1}(\tilde{x})$. Then $\tilde{h}(0,\cdot) = \tilde{f}(\cdot)$, $\tilde{h}(1,\cdot)$ is constant along each $M_1$-fiber, and, by Lemma \ref{gluing technique for toruses}, $\tilde{h}(1,\cdot)$ is totally geodesic along each $\tor^k$-fiber. So $\tilde{h}(1,\cdot)$ is totally geodesic along each $(M_1 \times \tor^k)$-fiber.

    Note that $\tilde{h}(\cdot)$ is invariant under the action of $\Gamma$ on $M_1 \times \tor^k$. Since $\psi_1$ is normal, $\tilde{h}$ descends to a homotopy $h : [0,1] \times W \times M \to N$ that initially agrees with $f$. By Lemma \ref{nnrc diagrams}(a), $h$ ends at a map that's totally geodesic along each $M$-fiber.
\end{proof}

\begin{proof}[of Theorem \ref{main theorem}]
    Part (c) is the special case of Theorem \ref{product version} in which $W$ is a point. Part (b) is an immediate consequence of Theorem 1.3(a) in \cite{Dibble2018a}. It remains to prove (a). By Lemma \ref{normal covering}, one may take $\psi_1$ to be normal. Let $f$ and $g$ be totally geodesic maps in $[F]$. Write $\tilde{F} = F \circ \psi_1$, and, for a fixed $\tilde{x}_0 \in M_1$ and $\theta \in \tor^k$, let $s_1,\ldots,s_k$ be the standard generators for $\pi_1(\tor^k)$ as in the proof of Lemma \ref{maps determined by fundamental group}. Write $\sigma_i = \tilde{F} \circ \iota_{\tilde{x}_0} \circ s_i$, i.e., $\sigma_i(\cdot) = \tilde{F}(\tilde{x}_0,s_i(\cdot))$, and $\Sigma = ([\sigma_1],\ldots,[\sigma_k])$. By assumption, $G = \tilde{F}_*(\pi_1(M_1 \times \tor^k))$ is an Abelian group generated by $[\sigma_1],\ldots,[\sigma_k]$. Note that $\min(G) = \psi_\Sigma^{-1}(C_\Sigma)$. Since $\tilde{f} = f \circ \psi_1$ and $\tilde{g} = g \circ \psi_1$ are totally geodesic, Lemma \ref{nnrc diagrams}(a) implies that $\tilde{f} = \Upsilon_{v_1 \otimes \cdots \otimes v_k} \circ \pi_1$ and $\tilde{g} = \Upsilon_{u_1 \otimes \cdots \otimes u_k} \circ \pi_1$ for $v_1 \otimes \cdots \otimes v_k,u_1 \otimes \cdots \otimes u_k \in C_\Sigma$. By Theorem \ref{flat torus theorem}(a), there exists a minimal geodesic $\alpha : [0,1] \to N_\Sigma$ connecting $u_1 \otimes \cdots \otimes u_k$ to $v_1 \otimes \cdots \otimes v_k$ whose image lies in $C_\Sigma$. The map $(s,\tilde{x},\vartheta) \mapsto \Upsilon_{\alpha(s)} \circ \pi_1(\tilde{x},\vartheta) = \Upsilon_{\alpha(s)}(\vartheta)$ is a homotopy from $\tilde{f}$ to $\tilde{g}$ through totally geodesic maps. Since $\psi_1$ is normal, averaging over its deck transformation group, as in the proof of Theorem \ref{product version}, yields a homotopy from $f$ to $g$ through totally geodesic maps.
\end{proof}

\begin{acknowledgements}\label{ackref}
    These results appear in my doctoral dissertation \cite{Dibble2014}. They were greatly improved through many helpful conversations with my dissertation advisor, Xiaochun Rong, as well as Penny Smith, Jason DeBlois, Christopher Croke, and Armin Schikorra. An earlier proof of these results used the work of Karuwannapatana--Maneesawarng \cite{KaruwannapatanaManeesawarng2007}, which I was introduced to by Stephanie Alexander.
\end{acknowledgements}

\bibliographystyle{amsplain}
\bibliography{bibliography}

\affiliationone{
   James Dibble\\
   Department of Mathematics\\
   University of Iowa\\
   14 MacLean Hall\\
   Iowa City, IA 52242\\
   United States of America
   \email{james-dibble@uiowa.edu}}

\end{document}